\documentclass[amsthm]{elsart}
\usepackage{yjsco}
\usepackage{amscd,amssymb,amsmath,latexsym,url}
\usepackage[all]{xy}
\usepackage{color,graphicx}
\usepackage{amssymb}
\usepackage{graphicx}%
\setlength{\textwidth}{15cm} \setlength{\oddsidemargin}{0.6cm}
\setlength{\evensidemargin}{1.5cm} \setlength{\topmargin}{0cm}
\setlength{\belowdisplayskip}{3mm}
\setlength{\abovedisplayshortskip}{0mm}
\setlength{\belowdisplayshortskip}{2mm}
\setlength{\baselineskip}{10pt}
\setlength{\normalbaselineskip}{10pt}
\newtheorem{theorem}{Theorem}

           {\vspace{3.3mm}
           \noindent{\bf #1}\it}%
           {\vspace{3.3mm}}
\newtheorem{corollary}[theorem]{Corollary}
\newtheorem{lemma}[theorem]{Lemma}
\newtheorem{proposition}[theorem]{Proposition}

\newenvironment{acknowledgement}{\noindent\textbf{Acknowledgements.}}{}

\newcommand{\cp}{\mbox{coeff}}
\def\ov#1{{\overline{#1}}}

\def\Res{{\rm{Res}}}

\def\Sres{{\mbox{Sres}}}

\def\N{{\mathbb N}}

\def\Z{{\mathbb Z}}

\def\Res{{\rm{Res}}}

\begin{document}

\begin{frontmatter}

\title{Sylvester's double sums: \\an inductive proof of the general case }

\thanks{
 Krick was partially
supported by the research projects  UBACyT X-113 2008-2010 and
CONICET PIP 2010-2012   (Argentina);  Szanto's was partially
supported by NSF grant  CCR-0347506 (USA).}

\author{Teresa Krick}
\address{Departamento de Matem\'atica, Facultad de Ciencias Exactas y
Naturales, Universidad de Buenos Aires and IMAS, CONICET,   1428
Buenos Aires, Argentina.}
\ead{krick@dm.uba.ar} \ead[url]{http://mate.dm.uba.ar/\~\,krick}

\author{Agnes Szanto}
\address{Department of Mathematics, North Carolina State
University, Raleigh NC 27695, USA.}
\ead{aszanto@ncsu.edu}
\ead[url]{http://www4.ncsu.edu/\~\,aszanto}



\begin{abstract}
In 1853 J. Sylvester introduced
a {family\/} of double sum expressions
for two finite sets of indeterminates
and showed that some members of the family are essentially
the polynomial subresultants
of the monic polynomials associated with these sets.
In 2009, in a joint work  with C. D'Andrea and H. Hong  we gave the complete description of all the members of the family as expressions in the coefficients of these
polynomials.
In 2010, M.-F. Roy and A. Szpirglas presented a new and natural inductive proof for the cases  considered by Sylvester.
Here we show how induction also allows to obtain the full description of Sylvester's double-sums.
\end{abstract}

\begin{keyword}
Sylvester's double sums, Subresultants.
\end{keyword}
\end{frontmatter}

\section{Introduction} Let $A$ and
$B$ be non-empty finite lists (ordered sets) of distinct
indeterminates. In \cite{sylv}, J. Sylvester introduced for each $0\le
p\le |A|$ and $0\le q \le |B|$ the following univariate polynomial in the variable $x$, of degree
$\le p+q$, called the  {\em double-sum expression} in $A$ and $B$:

$$
\operatorname{Sylv}^{p,q}(A,B)  :=
\sum_{\scriptsize{\begin{array}{c} A^{\prime }\subset
A,\,B^{\prime}\subset B
\\[-1.8mm]
|A^{\prime}|=p,\,|B^{\prime}|=q
\end{array}}}R(x,A^{\prime
})\,R(x,B^{\prime})\,\frac{R(A^{\prime},B^{\prime})\,R(A-
A^{\prime},B- B^{\prime})}{R(A^{\prime},A- A^{\prime
})\,R(B^{\prime},B- B^{\prime})},$$ where for sets $Y$, $Z$ of
indeterminates,
\[
R(Y,Z):=\prod_{y\in Y,z\in Z}(y-z), \ \quad R(y,Z):=\prod_{z\in
Z}(y-Z).
\]
and by convention $R(Y,\emptyset)=1$.

\medskip

Let now  $f, g$ be  monic univariate
polynomials such that
$$
f=\prod_{\alpha\in A}(x-\alpha)=x^m+a_{m-1}x^{m-1}+\ldots+a_0 \ \mbox { and } \
g =\prod_{\beta\in B}(x-\beta)=x^n+b_{n-1}x^{n-1}+\ldots+b_0,
$$
where $m:=|A|\geq1$ and $n:=|B|\geq1$.
The {\em $k$-th subresultant} of the
polynomials $f$ and $g$ is defined, for $0\le k <  \min\{m,n\}$ or $k=\min\{m,n\}$ when
$m\neq n$,  as
{
\begin{equation}\label{defsub}
\Sres_k(f,g)
:=\det%
\begin{array}{|cccccc|c}
\multicolumn{6}{c}{\scriptstyle{m+n-2k}}\\
\cline{1-6}
a_{m} & \cdots & & \cdots & a_{k+1-\left(n-k-1\right)}& x^{n-k-1}f(x)&\\
& \ddots & && \vdots  & \vdots &\scriptstyle{n-k}\\
&  &a_{m}&\cdots &a_{k+1}& x^0f(x)& \\
\cline{1-6}
b_{n} &\cdots & & \cdots & b_{k+1-(m-k-1)}&x^{m-k-1}g(x)&\\
&\ddots &&&\vdots & \vdots &\scriptstyle{m-k}\\
&& b_{n} &\cdots & b_{k+1} & x^0g(x)&\\
\cline{1-6} \multicolumn{2}{c}{}
\end{array}
\end{equation}}
with $a_{\ell}=b_{\ell}=0$ for $\ell<0$. For $k=0$,  $ \Sres_0(f,g)$
coincides with the resultant:
\begin{equation}\label{result}\Res(f,g)= \prod_{\alpha\in A}
g(\alpha) = (-1)^{mn}  \prod_{\beta\in B} f(\beta).\end{equation}

Also, for instance,
\begin{equation}\label{sresm0} \Sres_m(f,g)=f \mbox{ for } m<n \ \mbox{ and } \ \Sres_n(f,g)=g \mbox{ for } n<m. \end{equation}

\medskip
Relating Sylsvester's double sums with the polynomials $f$ and $g$, it is immediate that
\begin{equation}\label{00} \operatorname{Sylv}^{0,0}(A,B)=R(A,B)=\Res(f,g),\end{equation}
\begin{equation}\label{m0} \operatorname{Sylv}^{m,0}(A,B)=R(x,A)=f \ \mbox{ and } \  \operatorname{Sylv}^{0,n}(A,B)=R(x,B)=g,\quad \end{equation}
\begin{equation}\label{mn}\operatorname{Sylv}^{m,n}(A,B)=R(x,A)\,R(x,B)\,R(A,B)=\Res(f,g)\,f\,g.\end{equation}

\medskip
More generally, every value of the polynomial  $\operatorname{Sylv}^{p,q}(A,B)$,  which is symmetric in the $\alpha$'s
and in the $\beta$'s,
can be expressed as a polynomial in $x$ whose coefficients are
rational functions in the $a_i$'s and the $b_j$'s.
Sylvester  in \cite{sylv} gave
this rational expression for the
following values of $(p,q)$:

\smallskip
\begin{enumerate}
\item If $0\le k:=p+q<\min\{m,n\}$ or if  $k=m<n$, then \cite[Art.~21]{sylv}:
$$\operatorname*{Sylv}\nolimits^{p,q}(A,B)=(-1)^{p(m-k)}{k\choose p}\,
\Sres_k(f,g).$$

\item If $p+q=m=n$, then \cite[Art.~22]{sylv}:
$$\operatorname*{Sylv}\nolimits^{p,q}(A,B)={m-1\choose q}\,f+{m-1\choose p}\,g.$$
 \item If $m<p+q<n-1,$ then \cite[Arts.~23~\&~24]{sylv}):
$$\operatorname*{Sylv}\nolimits^{p,q}(A,B)=0.$$
\item If $m<p+q=n-1$, then \cite[Art. 25]{sylv}:
$\operatorname*{Sylv}\nolimits^{p,q}(A,B)$ is a ``numerical
multiplier'' of $f$, but the ratio
is not established.
\end{enumerate}

\medskip
In \cite[Th.0.1 and Prop. 2.9]{LP},  A. Lascoux and P. Pragacz
presented  new proofs for the cases covered by  Items (1) and (2).
More recently, in a joint work with C. D'Andrea and H. Hong in
\cite[Th.2.10]{DHKS09} we introduced   a unified matrix formulation
that  allowed us to give an explicit formula for all possible values
of $(p,q)$, i.e. for $0\le p\le m, 0\le q\le n$. The proofs there
were elementary though cumbersome. In 2010, M.-F. Roy and A.
Szpirglas, were able to produce in \cite[Main theorem]{RoSp} a new
and natural inductive proof also for the cases covered by Item (1)
and (2). The aim of this note is to give, inspired by \cite{RoSp},
an elementary inductive proof for all the cases. We furthermore show
how the cases $p+q>\min\{m,n\}$, which seem somehow less natural
since there is no ``natural"  expression associated to them (and
were therefore not previously considered by Lascoux and Pragasz and
Roy and Szpirglas) immediately yield simple proofs for other known
interesting cases, as for instance for the cases $p+q=m<n $ and
$p+q=m=n$, which didn't have simple proofs yet.

\medskip
Let us now introduce the necessary notation  to formulate our main result.\\
As in \cite{DHKS09}, we split the last column of the matrix in \eqref{defsub} to write $\Sres_k(f,g)$ as the sum of two determinants, obtaining an
expression
\begin{equation}\label{subres}
\Sres_k(f,g)=F_k (f,g)\, f+G_k(f,g)\,g
\end{equation}
where the polynomials $F_k(f,g)$ and $G_k(f,g)$  are defined for
$0\le k <  \min\{m,n\}$ or $k=\min\{m,n\}$ when $m\ne n$
 as the determinants of the $(m+n-2k)$-matrices:

{\tiny $$
  F_k(f,g)
:= \det
\begin{array}{|cccccc|c}
\cline{1-6}
a_{m} & \cdots & & \cdots & a_{k+1-\left(n-k-1\right)}& x^{n-k-1}&\\
& \ddots & && \vdots  & \vdots&\scriptstyle n-k \\
&  &a_{m}&\cdots &a_{k+1}& x^0 & \\
\cline{1-6}
b_{n} &\cdots & & \cdots & b_{k+1-(m-k-1)}&0 &\\
&\ddots &&&\vdots & \vdots & \scriptstyle m-k \\
&& b_{n} &\cdots & b_{k+1} & 0 &\\
\cline{1-6}
\end{array}\ , \
G_{k}(f.g)
:=\det%
\begin{array}{|cccccc|c}
\cline{1-6}
a_{m} & \cdots & & \cdots & a_{k+1-\left(n-k-1\right)}& 0&\\
& \ddots & && \vdots  & \vdots &\scriptstyle n-k \\
&  &a_{m}&\cdots &a_{k+1}& 0 & \\
\cline{1-6}
b_{n} &\cdots & & \cdots & b_{k+1-(m-k-1)}&x^{m-k-1}&\\
&\ddots &&&\vdots & \vdots &\scriptstyle m-k \\
&& b_{n} &\cdots & b_{k+1} & x^0&\\
\cline{1-6}
\end{array}.
 $$}
We observe that when $k<\min\{m,n\}$, $\deg
F_k(f,g)\le   n-k-1$ and $\deg G_k(f,g)\le  m-k-1$. Also
\begin{equation}\label{Fm}F_m(f,g)= 1 ,\  \ G_m(f,g)=0 \ \mbox{ for } m<n \ \mbox{ and } \
F_n(f,g)= 0 , \ G_n(f,g)=1 \ \mbox{ for } n<m \end{equation}
\begin{equation}\label{Gm-1}G_{m-1}(f,g)=1 \mbox{ for } m\le n \ \mbox{ and } \ F_{n-1}(f,g)=(-1)^{m-n+1} \mbox{ for } n\le m.\end{equation}

 \medskip
We finally introduce the following notation that we
 will keep all along in this text.
 Given $m,n\in \N$,  $p,q\in \Z$ such that $0\le p\le m$, $0\le q\le
 n$ and $k=p+q$, we set
  $$\ov p:=m-p,\ \ \ov q:= n-q \ \mbox{ and } \ \ov k:=\ov p + \ov
  q-1
= m+n- k-1.$$ Sylvester's double sums, for $k$ ``too big"  w.r.t.
$m$ and $n$, will be expressed in our result in terms of the
polynomials $F_{\ov k}(f,g)$ and $G_{\ov k}(f,g)$, well-defined
since  the condition $n-1\le k\le m+n-1$ for $m<n$ is equivalent to
$0\le \ov k\le m$, and the condition $m\le k\le 2m-1$ for $m=n$ is
equivalent to $0\le \ov k\le m-1$.

\begin{theorem}\label{theorem} (See also \cite[Th.2.10]{DHKS09}) \\Set $1\le m\le n$, and let $0\le p\le m$, $0\le q\le
 n$ and $k=p+q$.  \\
Then, for  $(p,q)\ne (m,n)$,\\
-- when $m<n$: {\small $$\operatorname{Sylv}^{p,q}(A,B)
=\left\{\begin{array}{lll}(-1)^{p(m-k)}{k\choose p}\Sres_k(f,g)&\mbox{for}& 0\le k\le m\\
0&\mbox{for}& m+1\le k\le n-2\ \mbox{when }  m\le n-3\\
(-1)^{c}\Big({\ov k \choose \ov p}F_{\ov k}(f,g)\,f - {\ov k\choose
\ov q}G_{\ov k}(f,g)\,g\Big)&\mbox{for}& n-1\le k\le m+n-1
\end{array}\right.$$}
-- when $m=n$: {\small $$\operatorname{Sylv}^{p,q}(A,B)
=\left\{\begin{array}{lll}(-1)^{p(m-k)}{k\choose p}\Sres_k(f,g)&\mbox{for}& 0\le k\le m-1\\
(-1)^{c}\Big({\ov k\choose \ov p}F_{\ov k}(f,g)\,f - {\ov k\choose
\ov q}G_{\ov k}(f,g)\,g\Big)&\mbox{for}& m\le k\le 2m-1
\end{array}\right. ,$$}
where $c:=\ov p \,\ov q+ n-p-1+nq$;\\
and for  $(p,q)=(m,n)$:
$$\operatorname{Sylv}^{m,n}(A,B)=\Res(f,g)\,f\,g.$$
 \end{theorem}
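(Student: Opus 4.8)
The plan is to pin down $\operatorname{Sylv}^{p,q}(A,B)$ from its values at the roots of $f$ and of $g$, by induction on $m+n$, in the spirit of \cite{RoSp}. Since the indeterminates in $A\cup B$ are pairwise distinct, $f$ and $g$ are coprime and squarefree, so the $m+n$ elements of $A\cup B$ are distinct and a polynomial of $x$-degree $<m+n$ is determined by its values there, equivalently by its residues modulo $f$ and modulo $g$. Because $\deg_x\operatorname{Sylv}^{p,q}(A,B)\le p+q\le m+n-1$ for $(p,q)\neq(m,n)$, it suffices to compute $\operatorname{Sylv}^{p,q}(A,B)(\gamma)$ for every $\gamma\in A\cup B$ and then to check that the expression claimed in the theorem has $x$-degree $\le m+n-1$ and takes the same values on $A\cup B$; the case $(p,q)=(m,n)$ is already \eqref{mn}.

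The heart of the matter is a pair of \emph{removal identities}. Fix $\alpha\in A$, set $\tilde f:=f/(x-\alpha)$, and split the double sum defining $\operatorname{Sylv}^{p,q}(A,B)$ according to whether $\alpha\in A'$ or not. The terms with $\alpha\in A'$ contain the factor $x-\alpha$, extracted from $R(x,A')$; in the terms with $\alpha\notin A'$ one writes $A-A'=\big((A\setminus\{\alpha\})\setminus A'\big)\cup\{\alpha\}$ and absorbs the resulting extra factors using $R(\alpha,B)=g(\alpha)$ and $R(\alpha,A\setminus\{\alpha\})=\tilde f(\alpha)$. Specialising at $x=\alpha$ kills the first group, while in the second the value at $\alpha$ of the monic polynomial $R(x,A')R(x,B')$ cancels the denominator just introduced, so that the sum collapses to
\[
\operatorname{Sylv}^{p,q}(A,B)(\alpha)=(-1)^{p}\,g(\alpha)\,\cp_{x^{p+q}}\operatorname{Sylv}^{p,q}(A\setminus\{\alpha\},B).
\]
The same argument applied to $\beta\in B$ gives $\operatorname{Sylv}^{p,q}(A,B)(\beta)=(-1)^{m+p+q}\,f(\beta)\,\cp_{x^{p+q}}\operatorname{Sylv}^{p,q}(A,B\setminus\{\beta\})$, where we agree that $\operatorname{Sylv}^{p,q}$ vanishes when $p$ or $q$ exceeds the size of its set, so that both identities remain valid at the extremes $p=m$ and $q=n$.

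The base case of the induction, $m=n=1$, is a direct check against \eqref{00}--\eqref{mn}. For the step, $A\setminus\{\alpha\}$ has $m-1$ elements and $B\setminus\{\beta\}$ has $n-1$ elements, so the inductive hypothesis --- using, when $m=n$, the sign-twisted symmetry $\operatorname{Sylv}^{p,q}(A,B)=(-1)^{pq+\ov p\,\ov q}\operatorname{Sylv}^{q,p}(B,A)$ to put $\operatorname{Sylv}^{p,q}(A,B\setminus\{\beta\})$ into the range covered by the theorem --- provides closed formulas for both smaller double sums; substituting their $x^{p+q}$-coefficients into the two removal identities computes every value $\operatorname{Sylv}^{p,q}(A,B)(\gamma)$, $\gamma\in A\cup B$. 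By the reduction above, it then only remains to prove that the claimed expression $\Phi$ for $(A,B)$ has $x$-degree $\le m+n-1$ and satisfies $\Phi(\alpha)=(-1)^{p}g(\alpha)\cp_{x^{p+q}}\Phi_{A\setminus\{\alpha\},B}$ for $\alpha\in A$ and $\Phi(\beta)=(-1)^{m+p+q}f(\beta)\cp_{x^{p+q}}\Phi_{A,B\setminus\{\beta\}}$ for $\beta\in B$.

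This last verification is where the real work lies, and it is carried out regime by regime ($0\le k\le m$ or $k\le m-1$; the vanishing range; $n-1\le k\le m+n-1$ or $m\le k\le 2m-1$), the delicate points being the boundaries $k=m-1,m$ and $k=n-2,n-1$. Two ingredients are required: (a) a subresultant ``deflation'' step, expressing $g(\alpha)$ through $\Res(f,g)$ as in \eqref{result} and relating $\Sres_k,F_k,G_k$ of $(f,g)$ to those of $(\tilde f,g)$, whose leading coefficients are read off from \eqref{defsub}, \eqref{sresm0}, \eqref{Fm} and \eqref{Gm-1}; and (b) the bookkeeping of the sign $c=\ov p\,\ov q+n-p-1+nq$ and of the binomials $\binom{\ov k}{\ov p},\binom{\ov k}{\ov q}$ as $(m,n)$ is lowered to $(m-1,n)$ or $(m,n-1)$, where Pascal's rule is what produces the coefficients $\binom{k}{p}$. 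The ``unnatural'' regime $k>\min\{m,n\}$ is unavoidable here: its leading coefficients are precisely what feed the removal identities at the boundary $k=\min\{m,n\}$, and, as announced in the introduction, these same cases then immediately give the value $\binom{m}{p}f$ for $p+q=m<n$ and Sylvester's formula for $p+q=m=n$.
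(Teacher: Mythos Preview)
Your strategy is the same as the paper's: the removal identities you state are exactly Lemma~\ref{sylvester}, and the ``deflation step'' you invoke in (a) is exactly Lemma~\ref{dos} (together with its Corollary~\ref{corrs}); the paper also proves the ``big~$k$'' regime first and uses its leading coefficients to handle the boundary $k=\min\{m,n\}$, just as you announce in your last paragraph. Your single induction on $m+n$ versus the paper's separate inductions (on~$\ov k$ for Proposition~\ref{casogrande}, on~$m$ for Proposition~\ref{kchico}, on~$n$ for Propositions~\ref{k=m<n} and the vanishing range) is only an organisational difference.

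Where your proposal falls short of a proof is precisely item~(a). The relation you need between $F_k,G_k$ for $(f,g)$ and for $(\tilde f,g)$ or $(f,\tilde g)$ is not a matter of ``reading off leading coefficients'' from \eqref{defsub}, \eqref{sresm0}, \eqref{Fm}, \eqref{Gm-1}: those identities only supply the extremal values $F_m,G_m,G_{m-1},F_{n-1}$. What is actually required, and what carries the whole big-$k$ induction, is the identity $F_k(f,g)(\beta)=-\cp_{n-k-1}\!\big(F_{k-1}(f,g/(x-\beta))\big)$ and its companion for $G_k$ (Lemma~\ref{dos}), whose proof is a genuine determinantal computation with row and column operations. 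Without stating and proving this lemma your verification of the removal identity for $\Phi$ in the range $n-1\le k\le m+n-1$ cannot be completed, and then the boundary cases $k=m$ and $k=n-1$---which you correctly flag as delicate---cannot be fed into the small-$k$ and vanishing regimes. In short: the outline is right, but the one nontrivial technical ingredient is asserted rather than established.
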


Theorem~\ref{theorem} can be written in a more uniform manner
instead of being split in cases: by Identity~\eqref{subres},
 for $0\le k\le m$  when $m<n$ and for $0\le k<m$
when $m=n$,
$$\operatorname{Sylv}^{p,q}(A,B)
=(-1)^{p(m-k)} \Big({k\choose p}F_k(f,g)f + {k\choose
q}G_k(f,g)g\Big),$$ or for $0\le \ov k\le m$  when $m<n$  and for
$0\le \ov k<m$, when $m=n$,
\begin{multline}\label{eqex}\operatorname{Sylv}^{p,q}(A,B)
= (-1)^c\Big( {\ov k\choose \ov p}\Sres_{\ov k}(f,g)-{\ov k+1\choose
\ov q}G_{\ov k }(f,g)\,g\Big)\\  = (-1)^c\Big( {\ov k+1\choose \ov
p}F_{\ov k}(f,g)\,f-{\ov k \choose \ov q}\Sres_{\ov k}(f,g)\Big).
\end{multline}

The cases ``in between", for $m+1\le k\le n-2$ when $m\le n-3$, are
the cases when neither $0\le k\le m$ nor $0\le \ov k\le m$, i.e. the
cases when the corresponding matrices $F_k$, $G_k$ and $F_{\ov k}$,
$G_{\ov k}$ are not defined (or could be defined as $0$ for
uniformity).

 \medskip
We also note that the  case $k=m=n-1$ is covered twice:
$\Sres_m(f,g)=f= F_m(f,g)f-G_m(f,g)g$ since $\ov k =m$, $F_m=1$ and
$G_m=0$. Finally the case $p=m$, $q=n$ is Identity~\eqref{mn}.

\medskip The proof of Theorem \ref{theorem} is based, as the proof in \cite{RoSp}, on specialization
properties.

\section{Specialization properties}
The
following specialization  property of  Sylvester's double sums is
well-known and proved in \cite[Lemma 2.8]{LP}. It is also reproved in
\cite[Prop.3.1]{RoSp}, where it is used as one of the key ingredients of their
inductive proof for the cases $k\le m<n$ and $k<m=n$. We repeat it here for sake of
completeness.

\begin{lemma}\label{sylvester} For any  $\alpha \in A$  and $\beta\in
B$,
\begin{align*}
\bullet \ & \operatorname{Sylv}^{p,q}(A,B) (\alpha) =
(-1)^{p}\,\cp_{p+q}\big(\operatorname{Sylv}^{p,q}(A-\alpha,B)\big)\,R(\alpha,B)
\ \mbox{ for } \ 0\le p<m \mbox{ and } 0\le q\le n,\\ \bullet \ &
\operatorname{Sylv}^{p,q}(A,B) (\beta) = (-1)^{q+\ov
p}\,\cp_{p+q}\big(\operatorname{Sylv}^{p,q}(A,B-\beta)\big)\,R(\beta,A)
\ \mbox{ for }  \ 0\le p\le m  \mbox{ and } 0\le q<n.
\end{align*}
Here $\cp_{p+q}$ denotes the
coefficient of order  $p+q$ of
$\operatorname{Sylv}^{p,q}(A,B-\beta)$.

\end{lemma}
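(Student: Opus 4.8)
\medskip
\noindent I would prove the two bullets separately (the second one also follows from the first via the elementary symmetry $\operatorname{Sylv}^{p,q}(A,B)=(-1)^{pq+\ov p\,\ov q}\operatorname{Sylv}^{q,p}(B,A)$, immediate from the definition on replacing each factor $R(A',B')$, $R(A-A',B-B')$ by $R(B',A')$, $R(B-B',A-A')$ up to sign). For the first one, fix $\alpha\in A$ and split the double sum defining $\operatorname{Sylv}^{p,q}(A,B)$ according to whether $\alpha\in A'$ or $\alpha\notin A'$.

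\medskip
\noindent Every term with $\alpha\in A'$ vanishes at $x=\alpha$: then $x-\alpha$ divides $R(x,A')=\prod_{a'\in A'}(x-a')$, so $R(\alpha,A')=0$, while the ``coefficient part'' $R(A',B')R(A-A',B-B')/(R(A',A-A')R(B',B-B'))$ is finite because the indeterminates in $A\cup B$ are distinct. Hence $\operatorname{Sylv}^{p,q}(A,B)(\alpha)$ equals the sum over the pairs $(A',B')$ with $\alpha\notin A'$.

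\medskip
\noindent For such a pair $\alpha\in A-A'$, so $A-A'=\{\alpha\}\sqcup\big((A-\alpha)-A'\big)$, and hence $R(A-A',B-B')=R(\alpha,B-B')\,R\big((A-\alpha)-A',B-B'\big)$ while $R(A',A-A')=(-1)^{p}R(\alpha,A')\,R\big(A',(A-\alpha)-A'\big)$, the sign arising from $\prod_{a'\in A'}(a'-\alpha)=(-1)^{p}R(\alpha,A')$. Substituting these, evaluating the monic polynomial $R(x,A')R(x,B')$ at $x=\alpha$, cancelling the factor $R(\alpha,A')$, and using $R(\alpha,B')R(\alpha,B-B')=R(\alpha,B)$, the contribution of the pair $(A',B')$ becomes
$$(-1)^{p}\,R(\alpha,B)\,\frac{R(A',B')\,R\big((A-\alpha)-A',B-B'\big)}{R\big(A',(A-\alpha)-A'\big)\,R(B',B-B')}.$$
The pairs $(A',B')$ with $\alpha\notin A'$, $|A'|=p$, $|B'|=q$ are exactly the index set of $\operatorname{Sylv}^{p,q}(A-\alpha,B)$ (this is where $p<m$ enters, so that $p\le|A-\alpha|$), and, since $R(x,A')R(x,B')$ is monic of degree $p+q$, the displayed fraction is precisely the coefficient of $x^{p+q}$ of the corresponding term of $\operatorname{Sylv}^{p,q}(A-\alpha,B)$. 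Summing over all such pairs gives $\operatorname{Sylv}^{p,q}(A,B)(\alpha)=(-1)^{p}R(\alpha,B)\,\cp_{p+q}\big(\operatorname{Sylv}^{p,q}(A-\alpha,B)\big)$, which is the first bullet.

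\medskip
\noindent The second bullet is obtained the same way, splitting according to $\beta\in B'$ or $\beta\notin B'$: the terms with $\beta\in B'$ vanish at $x=\beta$, and for $\beta\notin B'$ one factors $\{\beta\}$ out of the second slot in both $R(A-A',B-B')=(-1)^{\ov p}R(\beta,A-A')\,R\big(A-A',(B-\beta)-B'\big)$ and $R(B',B-B')=(-1)^{q}R(\beta,B')\,R\big(B',(B-\beta)-B'\big)$; after cancelling $R(\beta,B')$ and using $R(\beta,A')R(\beta,A-A')=R(\beta,A)$ the two sign factors combine to $(-1)^{q+\ov p}$, which yields $\operatorname{Sylv}^{p,q}(A,B)(\beta)=(-1)^{q+\ov p}R(\beta,A)\,\cp_{p+q}\big(\operatorname{Sylv}^{p,q}(A,B-\beta)\big)$, now using $q<n$ so that $q\le|B-\beta|$. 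The only delicate point in the whole argument is this last sign count, together with checking that the surviving sum really is re-indexed bijectively onto the index set of $\operatorname{Sylv}^{p,q}(A-\alpha,B)$, resp.\ of $\operatorname{Sylv}^{p,q}(A,B-\beta)$; beyond that careful bookkeeping there is no genuine obstacle.
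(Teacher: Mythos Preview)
Your proof is correct and follows essentially the same route as the paper's: for the first identity you split the sum according to whether $\alpha\in A'$, discard the vanishing terms, factor out $(-1)^pR(\alpha,B)$, and recognise the leading coefficient of $\operatorname{Sylv}^{p,q}(A-\alpha,B)$, exactly as the paper does. For the second identity the paper simply invokes the symmetry $\operatorname{Sylv}^{p,q}(A,B)=(-1)^{pq+\ov p\,\ov q}\operatorname{Sylv}^{q,p}(B,A)$ (which you also mention); your additional direct computation splitting on $\beta\in B'$ is a fine alternative and the sign bookkeeping is right.
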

\begin{proof}
\begin{align*} \operatorname{Sylv}^{p,q}(A,B)  (\alpha)&=
\sum_{\scriptsize{\begin{array}{c} A^{\prime }\subset
A-\alpha,\,B^{\prime}\subset B
\\[-1.8mm]
|A^{\prime}|=p,\,|B^{\prime}|=q
\end{array}}}R(\alpha,A^{\prime
})\,R(\alpha,B^{\prime})\,\frac{R(A^{\prime},B^{\prime})\,R(A-
A^{\prime},B- B^{\prime})}{R(A^{\prime},A- A^{\prime
})\,R(B^{\prime},B- B^{\prime})}\\
&= (-1)^{p}\,R(\alpha, B)\sum_{\scriptsize{\begin{array}{c}
A^{\prime }\subset A-\alpha,\,B^{\prime}\subset B
\\[-1.8mm]
|A^{\prime}|=p,\,|B^{\prime}|=q
\end{array}}}\frac{R(A^{\prime},B^{\prime})\,R((A-\alpha)-
A^{\prime},B- B^{\prime})}{R(A^{\prime},(A-\alpha)- A^{\prime
})\,R(B^{\prime},B- B^{\prime})}\\
&= (-1)^{p}\,\cp_{p+q}
\big(\operatorname{Sylv}^{p,q}(A-\alpha,B) \big)\,R(\alpha,B).
\end{align*}
\noindent
The second identity is a consequence of the fact that
$$ \operatorname{Sylv}^{p,q}(A,B)=(-1)^{pq}\,(-1)^{\ov p\, \ov q}\, \operatorname{Sylv}^{q,p}(B,A).$$
\end{proof}

In the following we replace the specialization property of
subresultants proved in \cite[Prop. 4.1]{RoSp} by the specialization
property of the polynomials $F_k$ and $G_k$. This will allow a more
uniform and simpler proof of our main theorem, covering all cases of
$p$ and $q$.

\begin{lemma} \label{dos} For any root $\alpha $  of $f$ and  any root $\beta$ of $g$, we have
\begin{align*}\bullet \ &F_k(f,g)(\beta)= - \,\cp_{n-k-1}\big(
F_{k-1}(f,\frac{g}{x-\beta})\big)\  \mbox{ for } \ 1\le k\le
\min\{m,n\}-1,\\ \bullet \ &G_k(f,g)(\alpha) = (-1)^{m-k-1}
\,\cp_{m-k-1}\big( G_{k-1}(\frac{f}{x-\alpha},g)\big) \ \mbox{ for }
\ 1\le k\le \min\{m,n\}-1.
\end{align*}
Here $\cp_{n-k-1}$ (resp. $\cp_{m-k-1}$) denotes  the coefficient of order  $n-k-1$ (resp.
$m-k-1$) of the corresponding polynomial.
\end{lemma}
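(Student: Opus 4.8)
The plan is to prove both identities by elementary row and column operations on the matrices defining $F_k$ and $G_k$, in the matrix‑based spirit of \cite{DHKS09}.

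First I would record a symmetry that reduces the two statements to one. In the matrix defining $G_k(f,g)$, moving the block of $m-k$ rows built from the coefficients of $g$ past the block of $n-k$ rows built from the coefficients of $f$ — a permutation which simultaneously carries the last column to the last column of the matrix defining $F_k(g,f)$ — yields $G_k(f,g)=(-1)^{(m-k)(n-k)}F_k(g,f)$ whenever both sides are defined; applied with $k-1$ and with the degree $m-1$ polynomial $f/(x-\alpha)$ it likewise gives $F_{k-1}(g,f/(x-\alpha))=(-1)^{(m-k)(n-k+1)}G_{k-1}(f/(x-\alpha),g)$. Substituting these into the second formula of the lemma turns it into the first formula for the pair $(g,f)$ and the root $\alpha$ of $f$; since $2(n-k)+1$ is odd, the collected sign factors reduce to $(-1)^{(m-k)+1}=(-1)^{m-k-1}$, exactly as claimed. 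So it suffices to prove the first identity.

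For the first identity, fix a root $\beta$ of $g$ (hence not a root of $f$) and write $g=(x-\beta)\tilde g$ with $\tilde g$ monic of degree $n-1$, so that $x^j g=x^{j+1}\tilde g-\beta\,x^j\tilde g$ for every $j$. Substituting $x=\beta$ into the matrix defining $F_k(f,g)$ affects only its last column, which becomes the column $(\beta^{n-k-1},\dots,\beta^0,0,\dots,0)$, supported on the rows coming from $f$; write $F_k(f,g)(\beta)$ for the resulting $(m+n-2k)$-determinant. On the other side, since $\deg_x F_{k-1}(f,\tilde g)\le n-k-1$ and $x$ enters the defining $(m+n-2k+1)$-determinant of $F_{k-1}(f,\tilde g)$ only through its last column, the coefficient $\cp_{n-k-1}(F_{k-1}(f,\tilde g))$ of the top power is, up to the sign $(-1)^{m+n-2k}$, the $(m+n-2k)$-minor obtained by deleting the top $f$-row and the last column. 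The heart of the proof is to show that this minor and $F_k(f,g)(\beta)$ agree up to the sign $(-1)^{m+n-2k+1}$, which at once gives the factor $-1$. I would do this by a chain of elementary operations rooted in $g=(x-\beta)\tilde g$: the unipotent row operations $R_\ell\mapsto R_\ell-\beta R_{\ell+1}$ (leaving the bottom row fixed) collapse the block of shifts $x^{m-k}\tilde g,\dots,x^0\tilde g$ into the block $x^{m-k-1}g,\dots,x^0 g$ together with one leftover row $\tilde g$; then column operations weighting the coefficient columns by powers of $\beta$ — the step where $g(\beta)=0$ and the built‑in truncations of these matrices enter — eliminate the leftover row and the extra bottom coefficient column, turning the minor into exactly $F_k(f,g)(\beta)$.

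The index arithmetic in the three matrices is routine. The main obstacle is the bookkeeping that reconciles their differing sizes and shapes — the matrix of $F_{k-1}(f,\tilde g)$ is one unit larger and carries one more coefficient column and one more $\tilde g$-shift than the matrix of $F_k(f,g)$, whereas the latter picks up, after $x=\beta$, the extra column of powers of $\beta$ — and then checking that all of these discrepancies cancel precisely and that the net sign is exactly $-1$, uniformly for $1\le k\le\min\{m,n\}-1$, including the degenerate extreme cases $k=1$ and $k=\min\{m,n\}-1$.
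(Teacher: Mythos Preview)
Your proposal is essentially the paper's approach. The reduction of the second identity to the first via $G_k(f,g)=(-1)^{(m-k)(n-k)}F_k(g,f)$ and $F_{k-1}(g,f/(x-\alpha))=(-1)^{(m-k)(n-k+1)}G_{k-1}(f/(x-\alpha),g)$ is exactly what the paper does, and your sign bookkeeping is correct. For the first identity, both you and the paper extract $\cp_{n-k-1}$ as the $(1,m{+}n{-}2k{+}1)$ cofactor and then apply elementary operations driven by the coefficient relation $b_i=b'_{i-1}-\beta b'_i$ coming from $g=(x-\beta)\tilde g$.

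The only real difference is in which operations are chosen. The paper works symmetrically, transforming \emph{both} sides to a common $(m+n-2k-1)$-determinant: on the $F_{k-1}(f,\tilde g)$ side it performs two Laplace expansions (along the last column, then along the first) and then \emph{column} operations $C_j\mapsto C_j-\beta C_{j-1}$, which simultaneously turn the $b'$-entries into $b$-entries and the $a$-entries into $a_i-\beta a_{i+1}$; on the $F_k(f,g)(\beta)$ side it performs \emph{row} operations $R_i\mapsto R_i-\beta R_{i+1}$ on the $f$-block only, zeroing out the $\beta$-column and leaving, after one expansion, the identical matrix with the same $a_i-\beta a_{i+1}$ pattern. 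The signs come out $(-1)^{m-k+1}$ and $(-1)^{m-k}$, giving the factor $-1$.

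Your variant puts the row operations on the $\tilde g$-block of the $F_{k-1}$ minor instead. That does convert $\tilde g$-rows into $g$-rows, but it leaves the $a$-block untouched and hands you a leftover $\tilde g$-row, a missing top $f$-row, and a column discrepancy to reconcile in your final ``column operations weighted by powers of $\beta$'' step. Be aware that the paper's argument never actually invokes $g(\beta)=0$; it uses only the coefficient relation. The $g$-rows in these matrices are truncated, so weighted column sums do not literally evaluate $g$ at $\beta$, and your last step will need more care than the sketch suggests. If it becomes awkward, the paper's meet-in-the-middle choice of operations makes the two determinants line up immediately with no evaluation step at all.
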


\begin{proof}
Given a root  $\beta$  of $g$, we set
$$\frac{g}{x-\beta}:=x^{n-1}+b'_{n-2}x^{n-2}+\cdots + b'_0.$$
The following relationship between the coefficients of $g$ and of
$\frac{g}{x-\beta}$ is straightforward:
 \begin{eqnarray}\label{bi}
 b_i=b'_{i-1}-\beta b'_i \  \mbox{for } \ 1\le i\le n-1
 \ \mbox{ and } \ b_0=-\beta b'_0.
 \end{eqnarray}
(Here $b_n=b'_{n-1}=1$.)\\
 First consider
{\scriptsize
\begin{align*}& \cp_{n-k-1}\big(F_{k-1}(f,\frac{g}{x-\beta})\big)=\cp_{n-k-1}\big(\det
\begin{array}{|cccccc|c}
\cline{1-6}
a_{m} & \cdots & & \cdots & a_{k-(n-k-1)}& x^{n-k-1}&\\
& \ddots & && \vdots  & \vdots &\scriptstyle (n-1)-(k-1) \\
&  &a_{m}&\cdots &a_k& x^0 & \\
\cline{1-6}
b'_{n-1} &\cdots & & \cdots &b'_{k-(m-k)}&0&\\
&\ddots &&&\vdots & \vdots &\scriptstyle m-(k-1) \\
&& b'_{n-1} &\cdots &b'_k & 0&\\
\cline{1-6}
\end{array}\big)\\
& \ =(-1)^{m+n}\det \begin{array}{|cccccc|c} \cline{1-6}
0 & a_{m}&\cdots & & \cdots & a_{k-(n-k-2)}&\\
& &\ddots  &&& \vdots   &\scriptstyle n-1-k \\
&  &&a_{m}&\cdots &a_k & \\
\cline{1-6}
b'_{n-1}&\cdots&\cdots & &\cdots  &b'_{k-(m-k)}&\\
&\ddots &&& & \vdots&\scriptstyle m-k+1 \\
&& b'_{n-1} &\cdots & &b'_k&\\
\cline{1-6}
\end{array} \
=\ (-1)^{m-k+1}\det \begin{array}{|cccccc|c} \cline{1-6}
a_{m} &\cdots & & \cdots & a_{k-(n-k-2)}& &\\
& \ddots & &\vdots& &  &\scriptstyle n-1-k \\
 &  &a_{m}&\cdots &a_k && \\
\cline{1-6}
b'_{n-1}&\cdots & & \cdots &b'_{k-(m-k-1)}& &\\
&\ddots &&\vdots & &&\scriptstyle m-k \\
&& b'_{n-1} &\cdots &b'_k&&\\
\cline{1-6}
\end{array}.\\
\end{align*}}
We apply elementary column operations on the matrix above, replacing
the $j$-th column $C_j$ by $C_j-\beta C_{j-1}$ starting from the
last column $C_{n+m-2k-1}$ up to the second column $C_2$, and
using the relations in (\ref{bi}): {\scriptsize
\begin{align}\label{coeff}
 \cp_{n-k-1}\big(F_{k-1}(f,\frac{g}{x-\beta})\big)=(-1)^{m-k+1}\det
\begin{array}{|cllcccc|c} \cline{1-7}
a_{m} &a_{m-1}-\beta a_m&\cdots & & \cdots & a_{k-(n-k-2)}-\beta a_{k+1-(n-k-2)}& &\\
 &\ddots && &\vdots& &  &\scriptstyle n-1-k \\
   &&a_{m}&a_{m-1}-\beta a_m&\cdots &a_k-\beta a_{k+1} && \\
\cline{1-7}
b_{n}&b_{n-1}&\cdots & & \cdots &b_{k+1-(m-k-1)}& &\\
&\ddots &&&\vdots & &&\scriptstyle m-k \\
&& b_{n} &b_{n-1}& \cdots &b_{k+1}&&\\
\cline{1-7}
\end{array}.
\end{align}}
Next  consider {\scriptsize
\begin{align*}&
F_k(f,g)(\beta)= \det
\begin{array}{|cccccc|c}
\cline{1-6}
a_{m} & \cdots & & \cdots & a_{k+1-(n-k-1)}& \beta^{n-k-1}&\\
& \ddots & && \vdots  & \vdots &\scriptstyle n-k\\
&  &a_{m}&\cdots &a_{k+1}& \beta^0 & \\
\cline{1-6}
b_{n} &\cdots & & \cdots &b_{k+1-(m-k-1)}&0&\\
&\ddots &&&\vdots & \vdots &\scriptstyle m-k \\
&& b_{n} &\cdots &b_{k+1}k & 0&\\
\cline{1-6}
\end{array}.
\end{align*}}
We apply elementary row operations on the matrix above, replacing
the $i$-th row $R_i$ by $R_i-\beta R_{i+1}$, starting from the first
row $R_1$ up to row $R_{n-k-1}$: {\scriptsize
\begin{align}\label{Fk} \nonumber
F_k(f,g)(\beta)=\det
\begin{array}{|ccccccc|c}
\cline{1-7}
a_{m} & a_{m-1}-\beta a_m & && \cdots & a_{k+1-(n-k-1)}-\beta a_{k+2-(n-k-1)}& 0&\\
& \ddots && && \vdots  & \vdots &\scriptstyle n-k\\
&  &a_{m}\;\;a_{m-1}-\beta a_m&&\cdots &a_{k+2}-\beta a_{k+1}& 0 & \\
&  &a_{m}&a_{m-1}&\cdots &a_{k+1}& 1& \\
\cline{1-7}
b_{n} &\cdots & && \cdots &b_{k+1-(m-k-1)}&0&\\
&\ddots &&&&\vdots & \vdots &\scriptstyle m-k \\
&& b_{n} &\cdots &&b_{k+1} & 0&\\
\cline{1-7}
\end{array}\\
=(-1)^{m-k}\det\begin{array}{|ccccccc|c} \cline{1-7}
a_{m} & a_{m-1}-\beta a_m & && \cdots & a_{k+1-(n-k-1)}-\beta a_{k+2-(n-k-1)}& &\\
& \ddots && && \vdots  & &\scriptstyle n-k-1\\
&  &a_{m}\;\;a_{m-1}-\beta a_m&&\cdots &a_{k+2}-\beta a_{k+1}& & \\
 \cline{1-7}
b_{n} &\cdots & && \cdots &b_{k+1-(m-k-1)}&&\\
&\ddots &&&&\vdots &  &\scriptstyle m-k \\
&& b_{n} &\cdots &&b_{k+1} & &\\
\cline{1-7}
\end{array} .\end{align}}
We obtain the first identity of the statement by comparing
(\ref{coeff}) and (\ref{Fk}).

\noindent For the second identity, we have
\begin{align*}G_k(f,g)(\alpha)&=(-1)^{(n-k)(m-k)}F_k(g,f)(\alpha)\\ & =(-1)^{(n-k)(m-k)+1}
\cp_{m-k-1}\big(F_{k-1}(g,\frac{f}{x-\alpha})\big)\\ &
=(-1)^{(n-k)(m-k)+1}(-1)^{(m-k)(n-k+1)}\cp_{m-k-1}\big(G_{k-1}(\frac{f}{x-\alpha},g)\big)\\
& =(-1)^{m-k-1}\cp_{m-k-1}\big(G_{k-1} (\frac{f}{x-\alpha},g)\big).
\end{align*}
\end{proof}

As an immediate consequence we obtain  the specialization property
of subresultants which seemed to have been stated and proved for the
first time in \cite[Prop.~4.1]{RoSp}.
\begin{corollary}\label{corrs}  For any root $\alpha $  of $f$, any root $\beta$ of $g$
 and any $0\le k < \min\{m,n\}$, we have
\begin{align*}\bullet \ &\Sres_k(f,g)(\beta)  =(-1)^{m-k}\, \cp_k \big(\Sres_k(f\,
,\,\frac{g}{x-\beta})\big)\,f(\beta), \\ \bullet \ &
\Sres_k(f,g)(\alpha) = \cp_k \big(\Sres_k(\frac{f}{x-\alpha}\,
,\,g)\big)\,g(\alpha).\end{align*} Here $\cp_k$ denotes the
coefficient of order $k$ of the corresponding polynomial.
\end{corollary}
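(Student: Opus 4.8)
The plan is to read off both identities from the decomposition $\Sres_k(f,g)=F_k(f,g)\,f+G_k(f,g)\,g$ of~\eqref{subres} together with Lemma~\ref{dos}. For the first identity I fix a root $\beta$ of $g$ and evaluate~\eqref{subres} at $x=\beta$; since $g(\beta)=0$ the second summand vanishes and
$$\Sres_k(f,g)(\beta)=F_k(f,g)(\beta)\,f(\beta).$$
Assuming $k\ge 1$ (the case $k=0$ is handled separately: there $\Sres_0$ is the constant $\Res(f,g)$ and both identities are just the product formulas~\eqref{result}), the first bullet of Lemma~\ref{dos} turns this into $\Sres_k(f,g)(\beta)=-\,\cp_{n-k-1}\big(F_{k-1}(f,\frac{g}{x-\beta})\big)\,f(\beta)$, so everything comes down to matching this coefficient of $F_{k-1}$ against a coefficient of $\Sres_k$.

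The missing link is the algebraic identity
$$\cp_{n-k-1}\big(F_{k-1}(f,h)\big)=(-1)^{m-k+1}\,\cp_k\big(\Sres_k(f,h)\big),\qquad h:=\frac{g}{x-\beta},\ \ \deg h=n-1,$$
which I would establish at the level of the defining determinants. The left-hand side is the leading coefficient of $F_{k-1}(f,h)$ viewed as a polynomial in $x$, hence a single cofactor of the $(m+n-2k+1)\times(m+n-2k+1)$ matrix defining $F_{k-1}(f,h)$, namely the one obtained by deleting its first row and its last (polynomial) column. In the resulting square matrix the first column has a unique nonzero entry, the leading coefficient $1$ of $h$ sitting at the top of the $h$-block, so a Laplace expansion along that column leaves exactly the minor that computes $\cp_k\big(\Sres_k(f,h)\big)$ --- the last remaining column being precisely the column of coefficients of $x^k$ of the polynomials $x^jf$ and $x^jh$ occupying the last column of the matrix defining $\Sres_k(f,h)$ (cf.~\eqref{defsub}). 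The only real work here is the bookkeeping of the two cofactor signs and of the shifts in the index ranges, of the same elementary nature as the determinant manipulations already performed in the proof of Lemma~\ref{dos}. Feeding this back, the factors $-1$ and $(-1)^{m-k+1}$ combine to $(-1)^{m-k}$ and produce the first formula of the statement.

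The second identity follows by the symmetric route: evaluating~\eqref{subres} at a root $\alpha$ of $f$ now kills the first summand, giving $\Sres_k(f,g)(\alpha)=G_k(f,g)(\alpha)\,g(\alpha)$; the second bullet of Lemma~\ref{dos} then reduces matters to the companion identity $\cp_{m-k-1}\big(G_{k-1}(\frac{f}{x-\alpha},g)\big)=(-1)^{m-k-1}\,\cp_k\big(\Sres_k(\frac{f}{x-\alpha},g)\big)$, which one obtains either by rerunning the determinant computation above with the two blocks interchanged, or formally from the $F$-version through the symmetries $G_k(f,g)=(-1)^{(n-k)(m-k)}F_k(g,f)$ and $\Sres_k(f,g)=(-1)^{(m-k)(n-k)}\Sres_k(g,f)$ exploited in the proof of Lemma~\ref{dos}; here the two factors $(-1)^{m-k-1}$ cancel, matching the statement. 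I expect the sign bookkeeping inside these coefficient identities, rather than any conceptual difficulty, to be the only delicate point of the argument.
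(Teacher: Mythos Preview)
Your proposal is correct and follows essentially the same route as the paper: evaluate the decomposition~\eqref{subres} at the root, apply Lemma~\ref{dos}, and then match $\cp_{n-k-1}\big(F_{k-1}(f,h)\big)$ against $(-1)^{m-k-1}\cp_k\big(\Sres_k(f,h)\big)$ via the defining determinants, deducing the second identity from the first by the symmetry $\Sres_k(g,f)=(-1)^{(m-k)(n-k)}\Sres_k(f,g)$. Your separate treatment of $k=0$ via the product formula~\eqref{result} is in fact a welcome addition, since Lemma~\ref{dos} is only stated for $k\ge 1$.
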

\begin{proof} It is sufficient to prove the first identity, since the second identity is a consequence of
$$\Sres_k(g,f)=(-1)^{(m-k)(n-k)}\Sres_k(f,g).$$
  By \eqref{subres} and
the previous lemma,
$$\Sres_k(f,g)(\beta)=F_k(f,g)(\beta)\,f(\beta)=- \,\cp_{n-k-1}\big(
F_{k-1}(f,\frac{g}{x-\beta})\big)\,f(\beta).$$ Now it is immediate
to verify by the definition of the principal scalar subresultant of
order $k$ that
$$\cp_{n-k-1}\big(F_{k-1}(f,\frac{g}{x-\beta})\big)=(-1)^{m-k-1}\cp_k\big(\Sres_k(f,\frac{g}{x-\beta})\big).$$
\end{proof}

\section{Proof of Theorem~\ref{theorem}}

\noindent It turns out that the cases of Theorem 1 where $k$ is
``big" are easy to prove by induction and will be used later in the
other cases. That is why we start with this case first in the
following proposition. The proof will use a lemma for the extremal
cases $(p,n)$ and $(m,q)$, which is given after the proposition.
  We recall that  $\ov p=m-p$, $ \ov q= n-q$, and $\ov k=m+n-k-1$.

\begin{proposition}\label{casogrande} Set $1\le m\le n$ and let $0\le p\le m,\ 0\le q\le n$ and $k=p+q$ be
 such that $n-1\le k\le m+n-1$, i.e. $0\le \ov k\le m$, when $m<n$ or $m\le k\le 2m-1$, i.e. $0\le \ov k\le m-1$,
 when $m=n$. Then
$$\operatorname{Sylv}^{p,q}(A,B)
= (-1)^{\ov p\,\ov q + n-p-1+nq}\Big({\ov k\choose \ov p}F_{\ov
k}(f,g)\,f - {\ov k\choose \ov q}G_{\ov k}(f,g)\,g\Big).$$
\end{proposition}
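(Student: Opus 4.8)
The plan is to argue by induction on $m=|A|$, following the strategy of \cite{RoSp}. For the base case $m=1$ one checks the statement directly: here $A=\{\alpha\}$, so $p\in\{0,1\}$, and the relevant range of $k$ is $k\in\{n-1,n\}$ (or $k\in\{1\}$ when $m=n=1$). The extremal cases $(p,q)=(1,n)$, $(0,n)$, $(1,n-1)$ can be read off from Identities~\eqref{mn}, \eqref{m0} and from \eqref{Fm}--\eqref{Gm-1}, matching the claimed sign $(-1)^{\ov p\,\ov q+n-p-1+nq}$ by a finite sign bookkeeping. For the inductive step, assume the formula holds for all pairs of lists of sizes $(m-1,n)$ (and, via the symmetry $\operatorname{Sylv}^{p,q}(A,B)=(-1)^{pq}(-1)^{\ov p\,\ov q}\operatorname{Sylv}^{q,p}(B,A)$ used in Lemma~\ref{sylvester}, also the needed statements with the roles of $A,B$ swapped). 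Both sides of the asserted identity are polynomials in $x$ of degree $\le p+q=k$; I will show they agree by evaluating at the $m+n$ points of $A\cup B$ and comparing leading coefficients — this overdetermines a degree $\le k\le m+n-1$ polynomial, so equality follows. (When $(p,q)=(m,n)$ the degree is $m+n$ but that case is \eqref{mn} and is excluded/separate.)

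The heart of the argument is the evaluation at a point $\alpha\in A$. By Lemma~\ref{sylvester},
\[
\operatorname{Sylv}^{p,q}(A,B)(\alpha)=(-1)^p\,\cp_{p+q}\big(\operatorname{Sylv}^{p,q}(A-\alpha,B)\big)\,R(\alpha,B),
\]
valid for $0\le p<m$. Now $A-\alpha$ has size $m-1$, and the pair $(p,q)$ with $k=p+q$ still lies in the ``big'' range relative to $(m-1,n)$ (one checks $n-1\le k\le (m-1)+n-1$ reduces, at the top end $k=m+n-1$, to the extremal case $(p,q)=(m-1,n)$ handled by the Lemma below); so the induction hypothesis applies and gives $\operatorname{Sylv}^{p,q}(A-\alpha,B)$ as an explicit combination of $F_{\ov k'}(f/(x-\alpha),g)$, $G_{\ov k'}(f/(x-\alpha),g)$ times $f/(x-\alpha)$ and $g$, where $\ov k'=(m-1)+n-k-1=\ov k-1$. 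Extracting the coefficient of order $p+q=k$ kills the $f/(x-\alpha)$ term (its contribution has degree $\le (n-\ov k)-1+ (m-1-p)<k$ — a degree count to be verified) and, on the $g$ side, pairs the coefficient $\cp_{m-k-1}(G_{\ov k-1}(f/(x-\alpha),g))$ against $\cp_{?}(g)$; by Lemma~\ref{dos} this coefficient is exactly $(-1)^{m-\ov k-1}$ times ... wait — more precisely, the second bullet of Lemma~\ref{dos} reads $G_{\ov k}(f,g)(\alpha)=(-1)^{m-\ov k-1}\cp_{m-\ov k-1}(G_{\ov k-1}(f/(x-\alpha),g))$, which is precisely the bridge needed to recognize the specialized right-hand side at $\alpha$. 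So the evaluation at $\alpha$ matches, up to tracking the accumulated signs $(-1)^p$, the sign from Lemma~\ref{dos}, and the change of the exponent $c$ when $(m,p,k)\rightsquigarrow(m-1,p,k)$. Symmetrically, evaluation at $\beta\in B$ uses the first bullet of Lemma~\ref{sylvester} together with the first bullet of Lemma~\ref{dos} ($F_{\ov k}(f,g)(\beta)=-\cp_{n-\ov k-1}(F_{\ov k-1}(f,g/(x-\beta)))$), and now the $g$-term drops out by degree and the $f$-term survives.

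Two loose ends complete the proof. First, the \emph{leading-coefficient} comparison: both sides have degree $\le k$, and I must check the coefficient of $x^k$ agrees — on the left this is a purely combinatorial quantity (a signed count of subsets, computable directly from the double-sum definition, essentially a Vandermonde/$\binom{k}{p}$ identity), and on the right it comes from the top-degree parts of $F_{\ov k}f$ and $G_{\ov k}g$, which are controlled by \eqref{Fm}--\eqref{Gm-1} and the degree bounds $\deg F_k\le n-k-1$, $\deg G_k\le m-k-1$. Second, the \emph{extremal cases} $(p,q)=(p,n)$ and $(p,q)=(m,q)$, where Lemma~\ref{sylvester} does not apply in the needed direction (one of $p<m$, $q<n$ fails); these are exactly the situations isolated in the lemma announced to follow the proposition, and I will simply invoke that lemma. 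The main obstacle I anticipate is none of the structural steps but rather the \textbf{sign bookkeeping}: verifying that $(-1)^p$ (from Lemma~\ref{sylvester}) times the sign in Lemma~\ref{dos} times the inductive sign $(-1)^{c'}$ with $c'=\ov p\,\ov q+ n-p-1+nq$ computed for $(m-1,n)$ collapses back to $(-1)^c$ for $(m,n)$ — and similarly on the $\beta$ side with the extra $(-1)^{q+\ov p}$. This is routine but unforgiving, and is where I would spend the bulk of the write-up; organizing it as a single reduction ``$c(m,p,q)\equiv p + (m-k-1) + c(m-1,p,q) \pmod 2$'' and verifying that congruence once should tame it.
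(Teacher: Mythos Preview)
Your overall strategy---specialize at the $m+n$ points of $A\cup B$, use Lemma~\ref{sylvester} to reduce to a smaller instance, then invoke Lemma~\ref{dos} to convert the resulting coefficient into an evaluated $F_{\ov k}$ or $G_{\ov k}$, and treat the extremal cases $(p,n)$, $(m,q)$ separately via Lemma~\ref{teorema1}---is exactly the paper's. But your choice of induction variable is the wrong one, and this is a genuine gap, not just bookkeeping.

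You induct on $m=|A|$. The evaluation at $\alpha\in A$ indeed reduces to $\operatorname{Sylv}^{p,q}(A-\alpha,B)$ with sizes $(m-1,n)$, so far so good. But the evaluation at $\beta\in B$ reduces to $\operatorname{Sylv}^{p,q}(A,B-\beta)$ with sizes $(m,n-1)$: here $m$ has \emph{not} decreased, so your induction hypothesis does not apply. Your parenthetical about the symmetry $\operatorname{Sylv}^{p,q}(A,B)=(-1)^{pq+\ov p\,\ov q}\operatorname{Sylv}^{q,p}(B,A)$ does not rescue this: when $m=n$ the swapped instance has sizes $(n-1,m)=(m-1,m)$, whose first parameter is $m-1$, but now the \emph{second} parameter is $m$, not $n$, and you have no hypothesis for that either; and when $m<n$ the swap does not even land you in a case with smaller first parameter. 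The paper resolves this by inducting on $\ov k=m+n-k-1$ instead: removing an element from either $A$ or $B$ decreases $\ov k$ by exactly one, so a single induction handles both evaluations symmetrically. (Induction on $m+n$ would work for the same reason.) This is the key structural choice you are missing.

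Two smaller points. First, the leading-coefficient comparison you list as a ``loose end'' is unnecessary: both sides have degree $\le k\le m+n-1$, so agreement at the $m+n$ distinct points of $A\cup B$ already forces equality. Second, your degree argument for why the $f/(x-\alpha)$ term drops is muddled; the paper handles this cleanly by rewriting the inductive right-hand side via Identity~\eqref{eqex} so that one of the two terms is a genuine subresultant $\Sres_{\ov k-1}$ of degree $\le \ov k-1<k$, which manifestly contributes nothing to $\cp_k$.
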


\begin{proof} By induction on $\ov k\ge 0$:\\
The case $\ov k=0$ implies $(p,q)=(m-1,n)$ or $(p,q)=(m,n-1)$ and
will follow from Lemma~\ref{teorema1}.

\smallskip
\noindent Now set $\ov k>0$. \\
-- For $p=m$ and $q<n$ or $p<m$ and $q=n$, also by Lemma~\ref{teorema1},
$$\operatorname{Sylv}^{m,q}(A,B)=(-1)^{n-m-1+nq}F_{\ov q-1}(f,g)f \ \mbox{ and } \ \operatorname{Sylv}^{p,n}(A,B)=(-1)^{p}G_{\ov p-1}(f,g)g$$
accordingly, which matches the statement since in these cases ${\ov
k\choose \ov q}$ or ${\ov k \choose \ov p}$ equals $0$.

\noindent -- For $p<m$ and $q<n$, we specialize
$\operatorname{Sylv}^{p,q}(A,B)$ of degree $k\le m+n-2$ in the $m+n$
elements of $A\cup B$ by means of Lemma~\ref{sylvester} and  the
inductive hypothesis:
\begin{align*}
\operatorname{Sylv}^{p,q}(A,B) (\alpha)& =
(-1)^{p}\,\cp_{k}\big(\operatorname{Sylv}^{p,q}(A-\alpha,B)\big)\,g(\alpha)\\
&= (-1)^{c'+p}\,\cp_k\Big({\ov k-1\choose \ov p -1}\Sres_{\ov
k-1}(\frac{f}{x-\alpha},g) -{\ov k\choose \ov q}G_{\ov
k-1}(\frac{f}{x-\alpha},g)g\Big)\,g(\alpha),\end{align*}
by Identity~\eqref{eqex}. Here $c'= (\ov p-1)\ov q+n-p-1+nq$.\\
Note that we are looking for the coefficient of degree $k$ of the
expression between brackets; the condition  $\ov k -1\le m-1<n-1\le
k$ in case $m<n$ and $\ov k -1\le m-2 <k$ in case $m=n$ imply in
both cases that $\deg(\Sres_{\ov k-1}(\frac{f}{x-\alpha},g))\le\ov
k-1<k$. Then
$$ \operatorname{Sylv}^{p,q}(A,B) (\alpha)
= (-1)^{c'+p}\,\cp_k\Big(-{\ov k\choose \ov q}G_{\ov
k-1}(\frac{f}{x-\alpha},g)g\Big)\,g(\alpha).$$
 When $\ov k-1<m-1 $, i.e $k\ge n$, we  apply Lemma~\ref{dos} and get
\begin{align*}
\operatorname{Sylv}^{p,q}(A,B) (\alpha)
&= (-1)^{c'+p}\,\Big( -{\ov k\choose \ov q}\,\cp_{k-n}\big(G_{\ov k-1}(\frac{f}{x-\alpha},g)\big)\,g(\alpha)\Big)\\
&= (-1)^{c'+p+k-n}\,\Big( -{\ov k\choose \ov q}G_{\ov
k}(f,g)(\alpha)g(\alpha)\Big)\\&= (-1)^{\ov p\,\ov q + n-p-1+nq}
\,\Big( -{\ov k\choose \ov q}G_{\ov
k}(f,g)(\alpha)g(\alpha)\Big).\end{align*} When $\ov k-1=m-1$,
$G_{\ov k-1}(\frac{f}{x-\alpha},g)=0=G_{\ov k}(f,g)$ and therefore
we also get
$$\operatorname{Sylv}^{p,q}(A,B) (\alpha)=(-1)^{\ov p \,\ov q+ n-p-1+nq} \,
\Big( -{\ov k\choose \ov q}G_{\ov k}(f,g)(\alpha)g(\alpha)\Big).$$

\smallskip\noindent
Analogously,
\begin{align*}
\operatorname{Sylv}^{p,q}(A,B) (\beta)& = (-1)^{q+\ov
p+c''}\,\cp_k\Big({\ov k \choose \ov p}F_{\ov
k-1}(f,\frac{g}{x-\beta})f -{\ov
k-1\choose \ov q-1}\Sres_{\ov k-1}(f,\frac{g}{x-\beta}))\Big)\,f(\beta)\\
&= (-1)^{q+\ov p+c''}\,\cp_k\Big({\ov k \choose \ov p}F_{\ov k-1}(f,\frac{g}{x-\beta})f\Big)\,f(\beta)\\
&= (-1)^{q+\ov p+c''}\,{\ov k \choose \ov p}\,\cp_{k-m}\big(F_{\ov
k-1}(f,\frac{g}{x-\beta})\big)\,f(\beta)\\ &= (-1)^{q+\ov
p+c''+1}\,{\ov k \choose \ov p}\,F_{\ov k}(f,g)(\beta) \,f(\beta),
\end{align*}
where $c''=\ov p\,(\ov q-1) +n-1-p -1+(n-1)q $.
Therefore,
$$\operatorname{Sylv}^{p,q}(A,B) (\beta)=(-1)^{\ov p \,\ov q+ n-p-1 +nq}\,{\ov k \choose
\ov p}\,F_{\ov k}(f,g)(\beta) \,f(\beta).$$
This concludes the proof.
\end{proof}

The next lemma covers  the cases $(p,n)$ and $(m,q)$ needed in the
proof of the previous result. Observe
 that \begin{align*}\operatorname{Sylv}^{p,n}(A,B)  &=
 g\,\sum_{ A^{\prime }\subset
A,
|A^{\prime}|=p}R(x,A^{\prime
})\,\frac{R(A^{\prime},B)}{R(A^{\prime},A- A^{\prime
})}\ \mbox{ for } \ p\le m,\\
\operatorname{Sylv}^{m,q}(A,B)  &=  f\, \sum_{B'\subset B, |B'|=q} R(x,B')\frac{R(A,B')}{R(B',B-B')} \ \mbox{ for } \ q\le n.\end{align*}

\begin{lemma}\label{teorema1} Set $1\le m\le n$. Then
\begin{enumerate}
\item \label{item1} $\operatorname{Sylv}^{p,n}(A,B) =  (-1)^{p}G_{\ov p-1}(f,g)\,g$ \
for $0\le p\le m-1$, i.e. $1\le \ov p \le m$.
\item \label{item4}  $\operatorname{Sylv}^{m,q}(A,B) =
(-1)^{n-m-1+nq}F_{\ov q-1}(f,g)\,f$ \  for $n-m-1\le q\le n-1$, i.e.
$1\le \ov q \le m+1$, when $m<n$ and  for $0\le q\le m-1$, i.e.
$1\le  \ov q \le m$,  when $m=n$.
\end{enumerate}
\end{lemma}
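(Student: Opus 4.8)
The plan is to prove the two identities of Lemma~\ref{teorema1} by induction on $\ov p$ (resp. $\ov q$), using the specialization property of Lemma~\ref{sylvester} together with the specialization property of $F_k$ and $G_k$ from Lemma~\ref{dos}, exactly in the spirit of the proof of Proposition~\ref{casogrande}. Since the two statements are related by the symmetry $\operatorname{Sylv}^{p,q}(A,B)=(-1)^{pq}(-1)^{\ov p\,\ov q}\operatorname{Sylv}^{q,p}(B,A)$ and $G_k(f,g)=(-1)^{(m-k)(n-k)}F_k(g,f)$, it suffices to do the work for one of them — say \eqref{item1}, $\operatorname{Sylv}^{p,n}(A,B)=(-1)^pG_{\ov p-1}(f,g)\,g$ — and then deduce \eqref{item4} by bookkeeping the signs, being careful that when $m=n$ the symmetry swaps the roles of $f$ and $g$ but the ranges of validity still match up.

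For the base case $\ov p=1$, i.e. $p=m-1$: here $\operatorname{Sylv}^{m-1,n}(A,B)=g\sum_{\alpha\in A}R(x,A-\alpha)\,\dfrac{R(A-\alpha,B)}{R(A-\alpha,\alpha)}$, and one recognizes the sum as (up to sign) a Lagrange-type interpolation expression; comparing with the definition of $G_0=G_0(f,g)$ — which is, up to sign, the resultant-type cofactor — gives $G_{\ov p-1}=G_0$ and the claimed constant $(-1)^{m-1}$. Alternatively, and more in keeping with the inductive machinery, one can treat $\ov p=1$ inside the induction and instead take as genuine base case $\ov p=0$ is excluded, so one checks $\ov p=1$ by a direct degree-and-leading-coefficient computation: $\operatorname{Sylv}^{m-1,n}(A,B)$ has degree $\le m-1+n$, is divisible by $g$ (visible from the displayed formula), and the quotient is a polynomial of degree $\le m-1$; evaluating it at each root $\alpha$ of $f$ via Lemma~\ref{sylvester} pins it down, or one simply invokes the known value \eqref{m0}-type reasoning. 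I would present the clean interpolation argument.

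For the inductive step, assume \eqref{item1} for $\ov p-1$ and prove it for $\ov p$, with $p<m-1$. Specialize $\operatorname{Sylv}^{p,n}(A,B)$ at a root $\alpha$ of $f$ by the first bullet of Lemma~\ref{sylvester}:
\begin{align*}
\operatorname{Sylv}^{p,n}(A,B)(\alpha)
&=(-1)^{p}\,\cp_{p+n}\big(\operatorname{Sylv}^{p,n}(A-\alpha,B)\big)\,g(\alpha)\\
&=(-1)^{p}\,\cp_{p+n}\big((-1)^{p}G_{\ov p-2}(\tfrac{f}{x-\alpha},g)\,g\big)\,g(\alpha),
\end{align*}
using the inductive hypothesis with $|A-\alpha|=m-1$, for which $\overline{p}$ is replaced by $(m-1)-p=\ov p-1$, so the relevant index is $\ov p-2$. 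Since $\deg G_{\ov p-2}(\tfrac{f}{x-\alpha},g)\le (m-1)-(\ov p-2)-1=\ov p-2$ and $\deg g=n$, the coefficient of degree $p+n=(m-\ov p)+n$ in the product $G_{\ov p-2}\cdot g$ equals $\cp_{m-\ov p}\big(G_{\ov p-2}(\tfrac{f}{x-\alpha},g)\big)$; this index $m-\ov p$ is exactly $m-1-(\ov p-1)$, the degree bound, so we are reading off the top coefficient in the right range to apply the second bullet of Lemma~\ref{dos} (with $f$ there replaced by $f$, $\alpha$ a root, and $k$ chosen so that $m-k-1=\ov p-2$, i.e. $k=\ov p-1$; note the required $1\le k\le\min\{m,n\}-1$ holds since $2\le\ov p\le m$ and $m\le n$). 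That lemma gives $\cp_{m-(\ov p-1)-1}\big(G_{\ov p-2}(\tfrac{f}{x-\alpha},g)\big)=(-1)^{-(m-(\ov p-1)-1)}G_{\ov p-1}(f,g)(\alpha)$, i.e. up to the explicit sign $(-1)^{\ov p-2}$. Collecting signs yields $\operatorname{Sylv}^{p,n}(A,B)(\alpha)=(-1)^{p}G_{\ov p-1}(f,g)(\alpha)\,g(\alpha)$ for every root $\alpha$ of $f$. Since both sides are polynomials in $x$ of degree $\le p+n < m+n$ (here we use $p\le m-1$), and they agree at the $m$ roots of $f$ after factoring out $g$ — more precisely $\operatorname{Sylv}^{p,n}(A,B)=g\cdot h(x)$ with $\deg h\le p<m$, and $h(\alpha)=(-1)^pG_{\ov p-1}(f,g)(\alpha)$ at $m$ distinct points — the polynomial identity follows by interpolation (a polynomial of degree $<m$ is determined by its values at $m$ points). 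Then \eqref{item4} is obtained from \eqref{item1} via the symmetry relations, checking that $(-1)^{mn}(-1)^{\ov m\,\ov n}$-type factors and the $G\leftrightarrow F$ sign $(-1)^{(n-k)(m-k)}$ combine to $(-1)^{n-m-1+nq}$.

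The main obstacle I anticipate is purely the sign bookkeeping: tracking how $\ov p$ shifts to $\ov p-1$ under $A\mapsto A-\alpha$, making sure the degree bounds on $G_{\ov p-2}(\tfrac{f}{x-\alpha},g)$ are tight enough that $\cp_{p+n}$ of the product really is $\cp_{\text{top}}$ of $G_{\ov p-2}$ (so that Lemma~\ref{dos} applies at the correct index rather than giving $0$), and handling the boundary subcase where that degree bound is not strict so $G$ vanishes identically — there the identity reads $0=0$ on roots and one must argue separately that the polynomial itself vanishes, which follows since $\deg\operatorname{Sylv}^{p,n}(A,B)/g<m$ and it has $m$ roots among the $\alpha$'s. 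The ranges in \eqref{item4} for the case $m<n$ versus $m=n$ need the extra care flagged in the statement, since for $m<n$ one gets the wider range $1\le\ov q\le m+1$ coming from $\deg F$ being $n-k-1$ rather than $m-k-1$; I would verify the endpoint $\ov q=m+1$ (i.e. $q=n-m-1$) by a direct check against \eqref{Fm}–\eqref{Gm-1}.
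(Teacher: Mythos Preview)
Your treatment of Part~\eqref{item1} is essentially the paper's proof. Your induction on $\ov p$ is the same as the paper's induction on $m$, since in the inductive step $p$ stays fixed while $|A|$ drops by one, so $\ov p=m-p$ drops by one as well. The base case $\ov p=1$ (i.e.\ $p=m-1$) and the inductive step via Lemma~\ref{sylvester} and Lemma~\ref{dos} match the paper exactly; factoring out $g$ and interpolating at the $m$ roots of $f$ is a clean equivalent of the paper's comparison at all $m+n$ points. One minor slip: the sign from Lemma~\ref{dos} at $k=\ov p-1$ is $(-1)^{m-(\ov p-1)-1}=(-1)^{m-\ov p}=(-1)^p$, not $(-1)^{\ov p-2}$ as you wrote; your final answer is nonetheless correct.

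There is, however, a genuine gap in your derivation of Part~\eqref{item4}. The symmetry $\operatorname{Sylv}^{m,q}(A,B)=(-1)^{mq}\operatorname{Sylv}^{q,m}(B,A)$ would require applying Part~\eqref{item1} to the pair $(B,A)$, i.e.\ with the roles of $m$ and $n$ swapped. But Part~\eqref{item1} is stated (and its proof genuinely uses) the hypothesis $|A|\le |B|$: the base case relies on $G_{m-1}(f,g)=1$ from \eqref{Gm-1}, which needs $m\le n$, and the inductive application of Lemma~\ref{dos} at $k=\ov p-1$ needs $\ov p-1\le \min\{m,n\}-1$, which is automatic only when $m\le n$. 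So for $m<n$ you cannot invoke Part~\eqref{item1} on $(B,A)$, and your remark that only the endpoint $\ov q=m+1$ needs extra care understates the problem: none of the range $m<n$ is covered by symmetry alone. The paper handles this by using the symmetry argument only for the base case $n=m$, and then running a separate induction on $n\ge m$ for Part~\eqref{item4}, specializing $\operatorname{Sylv}^{m,q}(A,B)$ at $\beta\in B$ and applying the first bullet of Lemma~\ref{dos} for $F_k$ (rather than the second for $G_k$). You would need to add that induction.
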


\begin{proof}
(\ref{item1}) By induction on  $m\ge 1$.
\\
The case $m=1$  is clear from Identities~\ref{m0} and \ref{Gm-1},
since in this case $p=0$ and $\ov{p}=1$.

\noindent Now set  $m>1$  and let $0\le p\le  m-1$. Both
$\operatorname{Sylv}^{p,n}(A,B)$ and $G_{\ov p-1 }(f,g)\,g$ are
polynomials of degree bounded by  $p+n<m+n$
 and we compare them by specializing them into the $m+n$ elements
$\alpha \in A$ and $\beta\in B$.
Clearly both expressions vanish at every $\beta\in B$ and so we only need to compare them at $\alpha\in A$.
\\
-- For $p<m-1$, we apply
Lemma~\ref{sylvester}, the
inductive hypothesis and Lemma~\ref{dos} (and the fact that $g$ is monic):
\begin{align*}\operatorname{Sylv}^{p,n}(A,B)(\alpha)&=(-1)^p\,\cp_{p+n}\big(\operatorname{Sylv}^{p,n}(A-\alpha,B)\big)\,g(\alpha)\\
&=(-1)^{2p}\,\cp_{p+n}\big(G_{(m-1)-p-1}(\frac{f}{x-\alpha},g)\,g\big)\,g(\alpha)\\
&= \cp_{p}\big(G_{(m-1)-p-1}(\frac{f}{x-\alpha},g)\big)\,g(\alpha)\
= \ (-1)^{p}\, G_{\ov p-1}(f,g)(\alpha)\,g(\alpha).
\end{align*}
-- For $p=m-1$:
\begin{align*}\operatorname{Sylv}^{p,n}(A,B)(\alpha)&=R(\alpha, A-\alpha)\frac{R(A-\alpha,B)}{R(A-\alpha,\alpha)}\,g(\alpha)\\
 &=(-1)^{m-1}\prod_{\alpha'\in A} g(\alpha')\ = \ (-1)^{m-1}\Res(f,g)\ = \ (-1)^{m-1}G_0(f,g)(\alpha)g(\alpha)
 ,\end{align*}
 by Identity~\eqref{result} and the fact that  $\Res(f,g)=F_0(f,g)f+G_0(f,g)g$ has degree $0$ in $x$.
Therefore $\operatorname{Sylv}^{p,n}(A,B)=(-1)^{p}G_{\ov p-1}(f,g)\,g$.

 \smallskip \noindent
 (\ref{item4}) By induction on  $n\ge m$.\\
 For $n=m$,  by Item (\ref{item1}) we have that for $0\le q\le m-1$,
 \begin{align*}
\operatorname{Sylv}^{m,q}(A,B)&=(-1)^{mq}\operatorname{Sylv}^{q,m}(B,A)\ = \ (-1)^{mq+q}G_{\ov q-1}(g,f)\,f\\&=(-1)^{mq+q}
 (-1)^{(m-(\ov q-1))(n-(\ov q-1))}F_{\ov q-1}(f,g)\,f \ = \
 (-1)^{nq-1}F_{\ov q-1}(f,g)\,f.\end{align*}
Now set $n\ge m+1$  and let $n-m-1\le q\le n-1$. Both
$\operatorname{Sylv}^{m,q}(A,B)$ and $F_{\ov q-1}(f,g)\,f$  are
polynomials of degree bounded by  $m+q<m+n$ and we compare them by
specializing them in the $m+n$ elements
$\alpha \in A$ and $\beta\in B$.  Clearly both expressions vanish at every $\alpha\in A$ and so we only need to compare them at $\beta\in B$. \\
-- For $q<n-1$, we apply
Lemma~\ref{sylvester}, the inductive hypothesis and Lemma~\ref{dos}:
\begin{align*}\operatorname{Sylv}^{m,q}(A,B)(\beta)&=(-1)^{q}\,\cp_{m+q}\big(\operatorname{Sylv}^{m,q}(A,B-\beta)\big)\,f(\beta)\\
&=(-1)^{q+(n-1-m-1)+(n-1)q}
\,\cp_{m+q}\big(F_{(n-1)-q-1}(f,\frac{g}{x-\beta})\,f\big)\,f(\beta)\\
&= (-1)^{(n+m-2+nq)+1}\, F_{\ov q-1}(f,g)(\beta)\,f(\beta).
\end{align*}
-- For $q= n-1$,
\begin{align*}\operatorname{Sylv}^{m,n-1}(A,B)(\beta)&=f(\beta)\,R(\beta, B-\beta)\frac{R(A,B-\beta)}{R(B-\beta,\beta)}\\
 &=(-1)^{n-1+m(n-1)}\prod_{\beta'\in B} f(\beta')\ = \ (-1)^{(mn+n-m-1)+mn}\Res(f,g)\\ &= \ (-1)^{n-m-1}F_0(f,g)(\beta)f(\beta).\end{align*}
Therefore $\operatorname{Sylv}^{m,q}(A,B)=(-1)^{n-m-1+nq}F_{\ov q-1}(f,g)\,f$ as wanted.\\
\end{proof}

As a particular case of Proposition~\ref{casogrande},  using
Identities~\eqref{Fm} and \eqref{Gm-1}, we obtain Case  (2) and a
particular case of Case (4) of the introduction:
\begin{corollary}\label{particular}{\ } \\ \begin{enumerate}\item
\label{k=m=n} Set $1\le m=n$ and let $ 0\le p, \ 0\le q$ be such
that $p+q=m$. Then  $$\operatorname{Sylv}^{p,q}(A,B)  =  {m
-1\choose q} \,f+{m-1\choose p} \,g.$$
\item \label{mm+2} Set $1\le m=n-2$  and let $0\le p\le m$, $0\le q$  be such that  $p+q=n-1$. Then $$\operatorname{Sylv}^{p,q}(A,B)=(-1)^{p+1}{m \choose p}\,f.$$
\end{enumerate}
\end{corollary}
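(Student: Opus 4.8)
The plan is to read off both identities directly from Proposition~\ref{casogrande}: in each case the constraint $p+q=k$ pins down $\ov p$, $\ov q$ and $\ov k$ to very special values, the polynomials $F_{\ov k}(f,g)$ and $G_{\ov k}(f,g)$ degenerate to the constants recorded in \eqref{Fm} and \eqref{Gm-1}, and the only genuine work left is to simplify the sign $(-1)^{c}$ with $c=\ov p\,\ov q+n-p-1+nq$.

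For Item~(\ref{k=m=n}) I would set $m=n$ and $k=p+q=m$, so that $\ov p=m-p=q$, $\ov q=n-q=p$ and $\ov k=m+n-k-1=m-1$. Since $0\le\ov k\le m-1$, this falls inside the range of Proposition~\ref{casogrande} for $m=n$ (note $(p,q)\ne(m,n)$ because $p+q=m\ne 2m$ as $m\ge1$). By \eqref{Gm-1}, $G_{\ov k}(f,g)=G_{m-1}(f,g)=1$ (using $m\le n$) and $F_{\ov k}(f,g)=F_{m-1}(f,g)=(-1)^{m-n+1}=-1$ (using $n\le m$). Plugging these in and rewriting ${m-1\choose \ov p}={m-1\choose q}$, ${m-1\choose \ov q}={m-1\choose p}$ gives
$$\operatorname{Sylv}^{p,q}(A,B)=-(-1)^{c}\Big({m-1\choose q}f+{m-1\choose p}g\Big),$$
so it remains to check that $c$ is odd. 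This is the one spot that requires a computation: with $m=n$, $\ov p=q$, $\ov q=p$ and $p+q=m$ one has $c\equiv q(p+m)+(m-p-1)\pmod 2$, and since $p+m\equiv q$ and $m-p-1=q-1$, this reduces (using $q^2\equiv q$) to $c\equiv q+(q-1)\equiv 1\pmod 2$, as desired.

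For Item~(\ref{mm+2}) I would set $m=n-2$ (hence $n\ge3$ and $m<n$) and $k=p+q=n-1$, so that $\ov k=m+n-k-1=m$, $\ov p=m-p$ and $\ov q=n-q=p+1$. Since $0\le\ov k\le m$, Proposition~\ref{casogrande} applies (again $(p,q)\ne(m,n)$, as $p+q=n-1\ne m+n$). Now \eqref{Fm} gives $F_{\ov k}(f,g)=F_m(f,g)=1$ and $G_{\ov k}(f,g)=G_m(f,g)=0$, so the proposition collapses to $\operatorname{Sylv}^{p,q}(A,B)=(-1)^{c}{m\choose m-p}f=(-1)^{c}{m\choose p}f$, and all that is left is $c\equiv p+1\pmod 2$. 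Reducing $c=\ov p\,\ov q+n-p-1+nq$ modulo $2$ with $\ov p=m-p$, $\ov q=p+1$, $q=n-1-p$ and $n=m+2$ yields $\ov p\,\ov q\equiv n(p+1)$, $n-p-1\equiv n+p+1$ and $nq\equiv np$, whence $c\equiv n(p+1)+(n+p+1)+np\equiv p+1\pmod 2$.

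The conceptual content is entirely inherited from Proposition~\ref{casogrande} together with the constant evaluations \eqref{Fm} and \eqref{Gm-1}; the only place where I expect to have to be careful is the parity bookkeeping for $c$, so I would carry out those two congruences slowly (keeping in mind that $p^2\equiv p$ and $n^2\equiv n \pmod 2$) and there is nothing else to do.
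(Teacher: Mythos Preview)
Your proof is correct and follows exactly the approach the paper intends: specialize Proposition~\ref{casogrande} using the constant values of $F_{\ov k}$ and $G_{\ov k}$ from \eqref{Fm} and \eqref{Gm-1}, then check the sign. In fact you supply more detail than the paper, which simply asserts that the corollary is a particular case of Proposition~\ref{casogrande} together with \eqref{Fm} and \eqref{Gm-1} and omits the parity verification of $c$ that you carry out.
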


This allows us to  simplify the rather long proofs for  the cases
when $p+q=m<n$, which appeared previously in \cite{LP},
\cite{DHKS07} and \cite{RoSp}.

\begin{proposition}\label{k=m<n} Set $1\le m\le n-1$ and let $p\ge 0, \ q\ge 0$ be such that $1\le p+q=m$. Then $$\operatorname{Sylv}^{p,q}(A,B)  =  {m \choose p} \,f.$$
\end{proposition}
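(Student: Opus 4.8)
The plan is to prove Proposition~\ref{k=m<n} by the same specialization technique used throughout, comparing the two polynomials $\operatorname{Sylv}^{p,q}(A,B)$ and ${m\choose p}f$ at all $m+n$ elements $\alpha\in A$ and $\beta\in B$. Both are polynomials in $x$ of degree at most $m<m+n$, so agreeing on $m+n$ points forces equality. Since ${m\choose p}f$ vanishes at every $\alpha\in A$, the first task is to check that $\operatorname{Sylv}^{p,q}(A,B)(\alpha)=0$ as well. Here I would use the first identity of Lemma~\ref{sylvester}: $\operatorname{Sylv}^{p,q}(A,B)(\alpha)=(-1)^p\,\cp_{p+q}\big(\operatorname{Sylv}^{p,q}(A-\alpha,B)\big)\,g(\alpha)$, where now $A-\alpha$ has $m-1$ elements. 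Then $p+q=m=(m-1)+1>|A-\alpha|$, so we are in the range $k>|A-\alpha|$ for the pair $(A-\alpha,B)$; invoking Theorem~\ref{theorem} (or rather the already-proved Proposition~\ref{casogrande} and Corollary~\ref{particular}) for the smaller instance, $\operatorname{Sylv}^{p,q}(A-\alpha,B)$ is either $0$ or a combination $F_{\ov k}f'\pm G_{\ov k}g$ of degree $<k$, so its coefficient of order $p+q=k$ vanishes. This gives $\operatorname{Sylv}^{p,q}(A,B)(\alpha)=0$ for all $\alpha\in A$.

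The substantive part is evaluating at $\beta\in B$. Using the second identity of Lemma~\ref{sylvester}, $\operatorname{Sylv}^{p,q}(A,B)(\beta)=(-1)^{q+\ov p}\,\cp_{p+q}\big(\operatorname{Sylv}^{p,q}(A,B-\beta)\big)\,R(\beta,A)=(-1)^{q+\ov p}\,\cp_{m}\big(\operatorname{Sylv}^{p,q}(A,B-\beta)\big)\,f(\beta)$. Now $B-\beta$ has $n-1$ elements with $m\le n-1$, and for the pair $(A,B-\beta)$ we still have $p+q=m$, so this is again a ``$k=m$'' instance but with a strictly smaller second set: this is exactly the setup for an induction on $n$. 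For the base case $n=m$ (i.e.\ when $n-1=m-1$, so $B-\beta$ has $m-1=|A|-1$ elements, handled instead directly) — more cleanly, I would induct on $n\ge m$, with base case $n=m$ supplied by Corollary~\ref{particular}(\ref{k=m=n}), which gives $\operatorname{Sylv}^{p,q}(A,B)={m-1\choose q}f+{m-1\choose p}g$ when $p+q=m=n$; one then checks that evaluating this at $\beta\in B$ and extracting the leading coefficient reproduces the desired recursion. Alternatively, when $n-1\ge m$ the inductive hypothesis applies directly to $\operatorname{Sylv}^{p,q}(A,B-\beta)={m\choose p}\widetilde g$ where $\widetilde g=g/(x-\beta)$ is monic of degree $m$... wait, of degree $n-1$; its coefficient of order $m$ equals the coefficient of order $m$ of ${m\choose p}\widetilde g$, which (since $\widetilde g$ is monic of degree $n-1>m$ when $n-1>m$) is ${m\choose p}\cdot(\text{coeff of }x^m\text{ in }\widetilde g)$. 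To close the recursion I then need the elementary identity $f(\beta)\cdot(\text{coeff of }x^m\text{ in }g/(x-\beta))=(\text{value at }\beta\text{ of }f)$ matching $f(\beta)$ up to the sign $(-1)^{q+\ov p}$, using that $f$ itself is monic of degree $m$ and relations analogous to \eqref{bi}.

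The main obstacle I anticipate is the bookkeeping in the base case $n=m$: there one cannot strip a $\beta$ from $B$ and stay in the case $m\le n-1$, so the induction must bottom out at $n=m$ using Corollary~\ref{particular}(\ref{k=m=n}), and I must verify that the clean answer ${m\choose p}f$ is the specialization-consistent continuation of the two-term formula ${m-1\choose q}f+{m-1\choose p}g$ — i.e.\ that when one passes from $|B|=m$ to $|B|=m+1$ the coefficient of $g$ drops out. Concretely this amounts to showing $\cp_m\big({m-1\choose q}f+{m-1\choose p}g\big)={m-1\choose p}$ (since $f$ has degree $m-1$... again degree $m$; careful: with $|A|=m$, $f$ has degree $m$, so $\cp_m(f)=1$ and $\cp_m(g)=0$ when $|B|=m-1$) and reconciling this with the Pascal-type identity ${m-1\choose q}+{m-1\choose p}={m\choose p}$ (valid since $p+q=m$). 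The sign tracking through $(-1)^{q+\ov p}$ and the coefficient shift in Lemma~\ref{dos}-style relations is the only place where an error is likely; everything else is a routine degree count plus an appeal to Lemma~\ref{sylvester} and the already-established big-$k$ cases.
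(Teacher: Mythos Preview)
Your strategy of inducting on $n$ and using Corollary~\ref{particular}\eqref{k=m=n} at the bottom is essentially the paper's, but there are two genuine problems in the execution.

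First, evaluating at $\alpha\in A$ is unnecessary and, as written, leaves a gap. Both sides have degree $\le m<n$, so agreement at the $n$ points $\beta\in B$ already forces equality; the paper checks only these. Your argument at $\alpha$ needs $\cp_m\big(\operatorname{Sylv}^{p,q}(A-\alpha,B)\big)=0$, where $(A-\alpha,B)$ has sizes $(m-1,n)$. Proposition~\ref{casogrande} covers this only when $n-1\le k=m$, i.e.\ $n=m+1$; for $n\ge m+2$ the index $k=m$ falls in the ``in between'' range $(m-1)+1\le k\le n-2$ for the smaller pair, and the vanishing there is a separate proposition not yet established at this point in the paper. So ``already-proved Proposition~\ref{casogrande} and Corollary~\ref{particular}'' do not cover the case you need.

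Second, in the inductive step at $\beta$ you wrote $\operatorname{Sylv}^{p,q}(A,B-\beta)={m\choose p}\,\widetilde g$ with $\widetilde g=g/(x-\beta)$; the inductive hypothesis actually gives ${m\choose p}\,f$ (the polynomial attached to $A$ is unchanged). Since $f$ is monic of degree $m$, $\cp_m\big({m\choose p}f\big)={m\choose p}$ and the recursion closes immediately --- no auxiliary identity relating $f(\beta)$ to $\cp_m(\widetilde g)$ is needed. The paper's proof is just this: induct on $n\ge m+1$, evaluate only at $\beta\in B$ (the sign $(-1)^{q+\ov p}$ equals $+1$ because $q=\ov p=m-p$); at the base $n=m+1$, Corollary~\ref{particular}\eqref{k=m=n} gives $\operatorname{Sylv}^{p,q}(A,B-\beta)={m-1\choose q}f+{m-1\choose p}\frac{g}{x-\beta}$, both summands monic of degree $m$, so $\cp_m={m-1\choose q}+{m-1\choose p}={m\choose p}$ by Pascal's identity.
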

\begin{proof} By induction on $n\ge m+1$, comparing the two expressions at the $n>m$ elements of $B$.\\
For $n=m+1$, by Lemma~\ref{sylvester} and
Corollary~\ref{particular}\eqref{k=m=n},
\begin{align*}\operatorname{Sylv}^{p,q}(A,B) (\beta)&= \cp_{m}\big(\operatorname{Sylv}^{p,q}(A,B-\beta)\big)\,f(\beta)\\
&=\cp_{m}\big({m -1\choose q} f+{m-1\choose p}
\,\frac{g}{x-\beta}\big) \, f(\beta)\\& =  \Big({m -1\choose q}
+{m-1\choose p}\Big)f(\beta)\ = \ {m \choose p}f(\beta) .
\end{align*}
Now set  $n>m+1$,
$$\operatorname{Sylv}^{p,q}(A,B) (\beta)\ =\ \cp_{m}\big(\operatorname{Sylv}^{p,q}(A,B-\beta)\big)\,f(\beta)
\ = \ \cp_{m}\big({m\choose p} f\big)\,f(\beta) \ =   \ {m \choose p}\,f(\beta) .
$$
\end{proof}

We finish the proof of Theorem~\ref{theorem} by splitting it into the two remaining cases to be proven. The first case is the inductive proof of \cite{RoSp}
that we repeat here for the sake of completeness.
\begin{proposition}\label{kchico} Set $1\le m\le n$ and let $p\ge 0$, $q\ge 0$ and $k=p+q$ be such that $k\le m$ when $m<n$ and  $k<m$ when $m=n$. Then
$$\operatorname{Sylv}^{p,q}(A,B)
=(-1)^{p(m-k)}{k\choose p}\Sres_k(f,g).$$
\end{proposition}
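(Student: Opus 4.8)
The plan is to prove Proposition~\ref{kchico} by induction on $n$, following the strategy of \cite{RoSp}, by comparing the polynomial $\operatorname{Sylv}^{p,q}(A,B)$ with $(-1)^{p(m-k)}\binom{k}{p}\Sres_k(f,g)$ at enough points. Both sides are polynomials in $x$ of degree at most $k=p+q$, and since $k\le m\le n$, the set $B$ has at least $n\ge k+1$ elements (when $m<n$) — actually I need $k+1$ distinct evaluation points to pin down a degree-$k$ polynomial, so I will evaluate at the $n$ roots $\beta\in B$, which suffices as soon as $n\ge k+1$; the delicate sub-case $k=m=n$ is excluded from the statement and the case $k=m<n$ is handled separately (it is covered by Proposition~\ref{k=m<n}), so in the inductive step I may assume $n>m\ge k$, giving $n\ge k+1$ strictly. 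Alternatively one could also evaluate at the $\alpha\in A$, but $|A|=m\ge k$ need not exceed $k$, so the $\beta$'s are the safe choice.

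First I would establish the base case. The natural base is $n=m$ with $k<m$ (or, if one prefers to start the induction at the smallest admissible $n$, the case where $B$ has exactly the minimal needed size); here I would invoke the already-proven case results — for $m=n$ the statement with $k<m$ falls under the range already treated, and in the genuinely minimal situations one can read off the identity from the explicit low-degree formulas \eqref{00}, \eqref{m0} together with \eqref{subres} and the definition \eqref{defsub}. Then for the inductive step, assuming the result holds for all pairs with second index $<n$, I would fix $\beta\in B$ and apply the second specialization identity of Lemma~\ref{sylvester}:
\[
\operatorname{Sylv}^{p,q}(A,B)(\beta) = (-1)^{q+\ov p}\,\cp_{p+q}\!\big(\operatorname{Sylv}^{p,q}(A,B-\beta)\big)\,R(\beta,A).
\]
Since $|B-\beta| = n-1$, the inductive hypothesis applies to $\operatorname{Sylv}^{p,q}(A,B-\beta)$ provided $k\le m$ still holds relative to the new data — and it does, with $m<n$ becoming $m\le n-1$, so either $k<m=n-1$ (inductive case again) or $k=m<n-1\le$ new $n$, in which case Proposition~\ref{k=m<n} applies. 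Writing $g_\beta := g/(x-\beta)$, the hypothesis gives $\operatorname{Sylv}^{p,q}(A,B-\beta) = (-1)^{p(m-k)}\binom{k}{p}\Sres_k(f,g_\beta)$ (up to the relevant index shift in the $F,G$-split when $k=m$), so its $\cp_{p+q}=\cp_k$ is $(-1)^{p(m-k)}\binom{k}{p}\,\cp_k(\Sres_k(f,g_\beta))$.

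The key computational step — and the main obstacle — is then matching this against $\Sres_k(f,g)(\beta)$. Here Corollary~\ref{corrs} is exactly the tool: $\Sres_k(f,g)(\beta) = (-1)^{m-k}\,\cp_k\!\big(\Sres_k(f,g_\beta)\big)\,f(\beta)$. Combining, and noting $R(\beta,A)=(-1)^m f(\beta)$ (since $f=\prod_{\alpha\in A}(x-\alpha)$ gives $f(\beta)=\prod(\beta-\alpha)=(-1)^m R(\beta,A)\cdot(-1)^m$... — more precisely $f(\beta)=\prod_{\alpha\in A}(\beta-\alpha)$ while $R(\beta,A)=\prod_{\alpha\in A}(\beta-\alpha)$, so in fact $R(\beta,A)=f(\beta)$), I get
\[
\operatorname{Sylv}^{p,q}(A,B)(\beta) = (-1)^{q+\ov p}(-1)^{p(m-k)}\binom{k}{p}\,\cp_k(\Sres_k(f,g_\beta))\,f(\beta),
\]
which must equal $(-1)^{p(m-k)}\binom{k}{p}\Sres_k(f,g)(\beta) = (-1)^{p(m-k)}\binom{k}{p}(-1)^{m-k}\cp_k(\Sres_k(f,g_\beta))\,f(\beta)$. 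So everything reduces to the sign identity $(-1)^{q+\ov p} = (-1)^{m-k}$, i.e. $q+\ov p \equiv m-k \pmod 2$; since $\ov p = m-p$ and $k=p+q$ this reads $q+m-p \equiv m-p-q$, i.e. $2q\equiv 0$, which holds. Once the $n$ evaluations at the $\beta\in B$ agree and both sides have degree $\le k < n$, I conclude the two polynomials are identical, completing the induction. The one point requiring care throughout is the borderline $k=m$ where $\Sres_m$ must be interpreted via \eqref{sresm0}/\eqref{subres} and the index bookkeeping in Lemma~\ref{dos} and Corollary~\ref{corrs} (valid for $k<\min\{m,n\}$) forces one to route the argument through Proposition~\ref{k=m<n} rather than applying Corollary~\ref{corrs} directly.
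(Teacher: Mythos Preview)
Your inductive step is correct and mirrors the paper's argument with the roles of $A$ and $B$ swapped: you specialize at the $\beta\in B$ using the second identity of Lemma~\ref{sylvester} and Corollary~\ref{corrs}, whereas the paper specializes at the $\alpha\in A$ using the first identity and inducts on $m$. The sign bookkeeping you do is right, and for $n>m$ with $k<m$ the reduction to $(A,B-\beta)$ is legitimate.

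The genuine gap is your base case. You take the base of the induction on $n$ to be $n=m$ and assert that ``for $m=n$ the statement with $k<m$ falls under the range already treated.'' It does not: nothing proved prior to Proposition~\ref{kchico} covers $m=n$ with $k<m$. Proposition~\ref{casogrande} handles only $k\ge m$ when $m=n$; Proposition~\ref{k=m<n} handles only $k=m<n$; and the explicit identities \eqref{00}, \eqref{m0} give at most the single case $k=0$. So your base case is exactly the full $m=n$ part of the proposition you are trying to prove, and it is not trivial. This is why the paper inducts on $m$ rather than on $n$: the base $m=1$ is genuinely covered by \eqref{00}, \eqref{m0}, \eqref{sresm0} and Proposition~\ref{k=m<n}, and removing an $\alpha$ keeps $|A-\alpha|=m-1<m\le n$ so one never leaves the regime $|A|\le|B|$. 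If you insist on evaluating at the $\beta$'s, you would need a separate argument for $m=n$ (e.g.\ a nested induction on $m$ for that subcase), at which point you have essentially reproduced the paper's scheme.
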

\begin{proof} By induction on $m\ge 1$:\\
The case $ m=1$  is completely covered by Identities~\eqref{00},
\eqref{m0}, \eqref{sresm0} and Proposition~\ref{k=m<n}.
\\
Now set $m>1$ and let $0\le k=p+q\le m$ if $m<n$ and $0\le k=p+q<m$ if $m=n$. We have\\
-- For $0\le k\le m-1$, we compare $\operatorname{Sylv}^{p,q}(A,B)$
and $\Sres_k(f,g) $, which are both of degree $k<m$, by specializing
them into the $m$ elements $\alpha \in A$ by means of
Lemma~\ref{sylvester}, the inductive hypothesis and
Corollary~\ref{corrs}:
\begin{align*}\operatorname{Sylv}^{p,q}(A,B)(\alpha)&=
(-1)^p\,\cp_k\big(\operatorname{Sylv}^{p,q}(A-\alpha,B)\big)\,g(\alpha)\\
&=(-1)^{p} (-1)^{p(m-1-k)}\,{k\choose
p}\,\cp_k\big(\Sres_{k}(\frac{f}{x-\alpha},g)\big) \,g(\alpha)\\ &=
(-1)^{p(m-k)}\,{k\choose p}\,\Sres_k(f,g)(\alpha).
\end{align*}
-- For $k= m<n$, it is Proposition~\ref{k=m<n}.
\end{proof}

\begin{proposition} Set $1\le m\le n-3$  and let $0\le p\le m$, $0\le q\le n$ be such that
$m+1\le p+q\le n-2$. Then $$\operatorname{Sylv}^{p,q}(A,B)=0.$$
\end{proposition}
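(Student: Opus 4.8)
The plan is to argue by induction on $n$, the statement being vacuous when $n\le 3$ since no admissible $m$ then exists, keeping $m,p,q$ free subject to $1\le m\le n-3$ and $m+1\le k=p+q\le n-2$. First I would observe that $k\le n-2<n$ forces $q<n$ (otherwise $p=k-n<0$), so Lemma~\ref{sylvester} applies at every $\beta\in B$ and gives
$$\operatorname{Sylv}^{p,q}(A,B)(\beta)=(-1)^{q+\ov p}\,\cp_k\big(\operatorname{Sylv}^{p,q}(A,B-\beta)\big)\,f(\beta).$$
Hence it is enough to prove that the coefficient of order $k$ of $\operatorname{Sylv}^{p,q}(A,B-\beta)$ — the double sum attached to the sets $A$ and $B-\beta$, of respective sizes $m$ and $n':=n-1$ — vanishes for every $\beta\in B$; this shows that $\operatorname{Sylv}^{p,q}(A,B)$, a polynomial of degree $\le k\le n-2<n$, vanishes at the $n$ distinct elements of $B$, hence is identically zero.

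To control that coefficient I would split into two cases according to the position of $k$ relative to $n'$. If $m+1\le k\le n'-2=n-3$, then the pair $(A,B-\beta)$ satisfies the hypotheses of the statement for the smaller value $n'$, so by the inductive hypothesis $\operatorname{Sylv}^{p,q}(A,B-\beta)=0$ and in particular $\cp_k\big(\operatorname{Sylv}^{p,q}(A,B-\beta)\big)=0$. If instead $k=n-2$, then $k=n'-1\ge m+1$, so $\ov k=m+n'-k-1=m$ and $(A,B-\beta)$ lies in the range of Proposition~\ref{casogrande}; since $m<n'=\deg(g/(x-\beta))$, Identities~\eqref{Fm} give $F_m(f,g/(x-\beta))=1$ and $G_m(f,g/(x-\beta))=0$, so Proposition~\ref{casogrande} yields $\operatorname{Sylv}^{p,q}(A,B-\beta)=\pm{\ov k\choose \ov p}\,f$, a scalar multiple of $f$ and therefore of degree $\le m\le n-3<n-2=k$; hence again $\cp_k\big(\operatorname{Sylv}^{p,q}(A,B-\beta)\big)=0$. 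The recursion in the first case strictly decreases $n$ and must eventually land in the second case, which uses no induction, so the argument is well founded; in particular for $n=4$ the only possibility is $k=2=n-2$, which is already the second case.

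The main obstacle is precisely the boundary value $k=n-2$: there the reduced double sum $\operatorname{Sylv}^{p,q}(A,B-\beta)$ has dropped out of the vanishing regime into the ``numerical multiplier of $f$'' regime (Case (4) of the introduction), and what saves the argument is the observation — read off from Proposition~\ref{casogrande} and \eqref{Fm} — that this multiple of $f$ has degree $m$, strictly below $k$, so its coefficient of order $k$ is still zero. Everything else is the same specialization-and-degree-bound mechanism used in the preceding propositions, so I do not expect further difficulty.
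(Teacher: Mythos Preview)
Your proof is correct and follows the same inductive strategy as the paper: induct on $n$, specialize at the $n>k$ elements of $B$ via Lemma~\ref{sylvester}, and use that the reduced double sum is either zero (inductive range) or a scalar multiple of $f$ of degree $m<k$ (boundary $k=n-2$). The only difference is cosmetic: for the boundary case you invoke Proposition~\ref{casogrande} with Identities~\eqref{Fm} directly, whereas the paper quotes its special case Corollary~\ref{particular}\eqref{mm+2} for the base $n=m+3$ and then says the rest ``follows immediately''---your write-up makes that ``immediately'' explicit.
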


\begin{proof} By induction on  $n\ge m+3$, specializing the expression  in the $n>m+1=k$ elements of $B$  by Lemma~\ref{sylvester}.\\
For $n=m+3$, by Corollary~\ref{particular}\eqref{mm+2}:
$$\operatorname{Sylv}^{p,q}(A,B)(\beta)  =  -\,f(\beta)\cp_{m+1}\big(\operatorname{Sylv}^{p,q}(A,B-\beta) \big)=-\,f(\beta)\cp_{m+1} \big((-1)^{p+1}{m \choose p}\,f\big)=0
,$$ since $\deg(f)=m<m+1$.\\
The case $n>m+3$  follows immediately.
\end{proof}

\bigskip
\begin{acknowledgement}
T.~Krick would like to thank the Mittag-Leffler Institute for hosting her in May 2011,  during the
preparation of this note.
\end{acknowledgement}
\bigskip


\begin{thebibliography}{}

\bibitem[{D'Andrea et al.(2007)}]{DHKS07}
D'Andrea, Carlos; Hong, Hoon; Krick, Teresa; Szanto, Agnes.
\newblock{\em An elementary proof of Sylvester's double sums for subresultants.\/}
\newblock  J. Symb. Comput. Vol. {\bf 42} (2007) 290--297.

\bibitem[{D'Andrea et al.(2009)}]{DHKS09}
D'Andrea, Carlos; Hong, Hoon; Krick, Teresa; Szanto, Agnes.
\newblock{\em  Sylvester's double sums: the general case.\/}
\newblock  J. Symbolic Comput.  Vol. {\bf 44} (2009) 1164--1175.

\bibitem[{Lascoux and Pragacz(2003)}]{LP}
Lascoux, Alain; Pragacz, Piotr. \newblock{\em Double Sylvester sums
for subresultants and multi-Schur functions.\/} \newblock J.
Symbolic Comput. 35 (2003),  no. 6, 689--710.

\bibitem[Roy and Szpirglas(2010)]{RoSp}
Roy, Marie-Fran\c coise; Szpirglas, Aviva.
 \newblock{\em Sylvester double sums and subresultants.\/} \newblock
 J. Symbolic Comput. (to appear) (2010).

\bibitem[{Sylvester(1853)}]{sylv}
Sylvester, James Joseph.
\newblock {\em On a theory of syzygetic relations of two rational integral
functions, comprising anapplication to the theory of Sturm's
function and that of the greatest algebraical common measure.\/}
\newblock Philosophical Transactions of the Royal Society of London, Part III
(1853), 407--548.
\newblock Appears also in Collected Mathematical Papers
          of James Joseph Sylvester, Vol 1,
\newblock Chelsea Publishing Co. (1973), 429-586.

\end{thebibliography}
\end{document}